\pgfplotsset{compat=1.18} 
\newcommand{\cC}{\mathcal C}
\newcommand{\tr}{\text{Trop}}
\title{On the Genus of One Degree of Freedom Planar Linkages \\via Tropical Geometry}
\newtheorem{theorem}{Theorem}
\newtheorem{corollary}{Corollary}
\newtheorem{lemma}{Lemma}
\newtheorem{proposition}{Proposition}
\theoremstyle{definition}
\newtheorem{definition}{Definition}
\newtheorem{example}{Example}
\newtheorem{remark}{Remark}
\author{J. Schicho, A. K. Tewari, A. Warren}
\date{\today}
\begin{document}

\maketitle

\begin{abstract}
This paper focuses on studying the configuration spaces of graphs realised in $\mathbb C^2$, such that the configuration space is, after normalisation, one dimensional. If this is the case, then the configuration space is, generically, a smooth complex curve, and can be seen as a Riemann surface. The property of interest in this paper is the genus of this curve. Using tropical geometry, we give an algorithm to compute this genus. We provide an implementation in Python and give various examples.   
\end{abstract}

\section*{Introduction}

Let $G=(V,E)$ be an undirected simple graph without self-loops. Intuitively, a planar realisation of $G$ consists of a point in the real plane for every vertex and a straight line segment for every edge (edges may intersect). Let us prescribe a real number for every edge. Then a realisation such that the squared length of every edge is equal to the prescribed number is called a {\em configuration} of the graph (together with the assignment of numbers to edges). The set of configurations is an algebraic variety given by $|E|$ equations in $2|V|$ variables. If we have any realisation, then any translation or rotation of a configuration is again a configuration for the same edge numbers. We can normalise: pick an edge and fix its endpoints beforehand. The normed realisations form an algebraic variety with $|E|-1$ equations in $2|V|-4$ variables.

In this paper, we look at the complexification of the variety of normed realisations (given by the same equations, but variables range over the complex numbers). If the assigned edge numbers are generic, then the set of normalised configurations is a nonsingular algebraic variety of dimension $2|V|-|E|-3$, if it is not empty. The case of minimally rigid graphs is classical (see \cite{borcea2004laman,jackson2019laman}): in this case, $2|V|-|E|-3=0$, and for generic edge numbers we have a finite set of configurations. This finite number is independent of the choice of edge numbers as long as they are generic, and it is therefore an invariant of the graph called the {\em Laman number}. A combinatorial algorithm for computing the Laman number of a given minimally rigid graph is given in \cite{capco2018number}.

A one degree of freedom linkage/graph (1-dof, also called a calligraph), is obtained by removing one edge from a minimally rigid graph. In this case, the configuration set is a nonsingular algebraic variety of dimension~1. It may be reducible; the number of irreducible components is equal to the product of all maximal subgraphs that are minimally rigid, by \cite{lubbes2024irreducible}. Our result here is an algorithm that computes this genus. Notice that the genus does not depend on the choice of edge numbers as long as they are chosen generically, because each generic fiber is defined by essentially the same equations over the field generated by the edge numbers. As an application, we give an answer to ``Open Problem 3'' in \cite{lubbes2024open}, which asks for the genus of the plane curve traced by a single vertex while an edge is kept fixed. A classically known case is the 4-cycle, also known as four-bar linkage. Here, the configuration set is an irreducible curve of genus one (see \cite{izmestiev2020elliptic}).

Tropicalization has been used in \cite{capco2018number,dewar2024tropical,bernstein2019tropical} to prove statements about Laman numbers of minimally rigid graphs.
By some linear change of variables, one can reduce the system of algebraic equations that defines the configuration set of a graph to a system of linear equations that depend on the graph together with equations $s_et_e=\lambda_e$, with $e$ ranging over all edges and $s_e,t_e$ are indeterminates. The tropicalization process transforms these quadratic equations into linear equations of the form $x_e+y_e=w_e$, where $x_e$ and $y_e$ are tropical variables
and $w_e$ is a tropical (generic) parameter. The tropicalized system of equations consists of linear equations and inequalities, and its solution set is a tropical variety, the tropicalized configuration set. If $G$ is minimally rigid, then the tropical variety is a finite set of points, and its cardinality is the Laman number.
If $G$ is a 1-dof graph, then we have a disjoint union of tropical curves. We show that the genus of an irreducible component of the fiber is equal to the genus of the any of these tropical curves. Note that in general, the genus of an algebraic curve may be different from the genus of its tropicalization; however, if the tropicalization is tropically smooth, or equivalently if the algebraic curve is a Mumford curve, then the algebraic genus and the tropical genus coincide \cite{jell2020constructing}. 

Our algorithm computes the tropical curve explicitly; once this is achieved, computing the genus amounts to counting edges and vertices. We compute a point on the tropicalization of the generic configuration space by a probabilistic algorithm. Then we compute the directions of the edges incident to this point and follow them to the next vertex. We repeat this until no new edges and vertices are found, iteratively traversing the whole tropical curve. 

For each new vertex we find, we test a condition that implies that the tropical curve is smooth at that vertex, namely the transversality of the intersection of the Bergman fan of the graph and a mirror copy of the Bergman fan. If all tests are affirmative, then we compute the genus. In the unlikely case that we receive a negative test result, we give an error message. In this case, we can start over again by computing probabilistically a new starting point.

We have implemented the algorithm in Python; the code can be found at the GitHub repository \url{AudieWarren/1dof-Graph-Genus-Code}.
In order to use the code, download the folder `Modularversion'. In the file `graphgeneration' you can input the edges of your graph, with many examples already being in the file. Then simply run the `main' file; the programme will output the genus of the curve, together with a visualisation of the tropical curve. Note that you may have to install Networkx via pip for the code to run.

\section{The Precise Statement of the Problem, \\ and a Theorem on Tropical Curves}

\subsection{The Precise Statement of the Problem}

For any graph $G=(V,E)$ on $n \geq 2$ vertices with at least one edge, we label the vertices so that $\{1,2\} \in E$. In order to analyse the realisations of this graph, we fix the vertex $1$ at the origin, and the vertex $2$ to the point $(1,0)$. This removes the trivial motions of the graph via elements of $SE(2)$, and allows us to define the two-dimensional (normalised) edge map of $G$ in the following way.
\begin{gather*}
    f_G:\mathbb C^{2|V|-4} \rightarrow \mathbb C^{|E|-1} \\
    (x_i,y_i)_{i \in V \setminus \{1,2\}} \mapsto ((x_i-x_j)^2 + (y_i-y_j)^2)_{i,j \in E\setminus\{1,2\}}.
\end{gather*}
This map takes input positions in $\mathbb C^2$ for the vertices, and calculates the squared distances along edges. In this paper, we define a \textit{generic} element of $\mathbb C^{d}$ (for any $d)$ to be any point whose coordinate entries are algebraically independent over $\mathbb Q$. Any two generic elements are algebraically indistinguishable; more precisely, there are is a field isomorphism that preserves any variety/map defined over $\mathbb Q$ taking the first generic point to the second one. In this way, we study the \textit{generic fiber} of the edge map $f_G$ - this means the fiber of any generic element of $\mathbb C^{|E|-1}$. This generic fiber is also referred to as the generic (normalised, complex) configuration space of $G$. If the map $f_G$ is generically surjective, then the generic fiber is a smooth manifold of dimension $2|V|-|E|-3$. In this paper, we assume $2|V|-|E|=4$, in addition to the surjectivity of $f_G$. This is equivalent to the statement that $G$ can be obtained from a minimally rigid graph by removing a single edge. The graph can then be considered as a one degree of freedom linkage.

The number of irreducible components of the generic fiber is well-known: it is the product of the Laman numbers of all maximal subgraphs that are minimally rigid. We provide an algorithm that computes the genus of any irreducible component for a given graph that fulfills the above assumption. All irreducible components have the same genus, by \cite[Proposition~38]{lubbes2024irreducible}.

\subsection{A Theorem on Tropical Curves}

Tropical geometry is the piecewise linear shadow of algebraic geometry where with a modified notion of vanishing, the vanishing locus of tropical polynomials are studied with respect to the semiring $\mathbb{T} = (\mathbb{R} \cup \infty, \text{min}, +)$. We refer the reader to the standard text \cite{sturmfelsmaclagan} for basics concerning tropical geometry. A \emph{tropical curve} is a connected tropical variety of dimension one. By the structure theorem of tropical varieties \cite[Theorem 3.3.5]{sturmfelsmaclagan}, a tropical curve also enjoys the structure of a balanced rational polyhedral complex. For instance, planar tropical curves are dual to regular subdivisions of the corresponding Newton polygon. 

Given an irreducible algebraic curve, we can `tropicalize' the curve, which yields a tropical curve. The algebraic curve itself may be too complicated to work with directly, but under some conditions, certain properties of the algebraic curve can still be seen in its tropical counterpart. This paper is concerned with when the property of `genus' is retained by tropicalization, and how to calculate it explicitly. Figure \ref{fig:elliptic_curve_example} illustrates an example of a tropical elliptic curve alongside a classical elliptic curve; in this case the genus of the elliptic curve is reflected as the unique hexagonal cycle in the tropical elliptic curve.

\begin{figure}
  \centering
  \begin{minipage}[b]{0.45\textwidth}
    \centering
  \begin{tikzpicture}
    \begin{axis}[
      xlabel={$x$},
      ylabel={$y$},
      xmin=-2, xmax=2,
      ymin=-2, ymax=2,
      axis equal image,
      axis lines=center,
      xtick=\empty,
      ytick=\empty,
      ]
      \draw[blue, ultra thick] (axis cs:-1.325,0.185) -- (axis cs:-1.325,-0.185);
      \addplot[domain=-2:2, samples=1000, blue, ultra thick] {(1 - x + x^3)^(1/3)};
      \addplot[domain=-2:2, samples=1000, blue, ultra thick] {-(1 -x + x^3)^(1/3)};
    \end{axis}
  \end{tikzpicture}
  \caption{A elliptic curve $C$}
  \end{minipage}
  \begin{minipage}[b]{0.45\textwidth}
    \centering
\begin{tikzpicture}
     \draw[blue, ultra thick] (-2,-2) -- (0,0);
     \draw[blue, ultra thick] (0,0) -- (0,1);
     \draw[blue, ultra thick] (0,1) -- (-1,1);
     \draw[blue, ultra thick] (-1,1) -- (-1,3);
     \draw[blue, ultra thick] (-1,1) -- (-2,0);
     \draw[blue, ultra thick] (0,1) -- (1,2);
     \draw[blue, ultra thick] (1,2) -- (1,4);
     \draw[blue, ultra thick] (1,2) -- (2,2);
     \draw[blue, ultra thick] (2,2) -- (3,3);
     \draw[blue, ultra thick] (3,3) -- (3,4);
     \draw[blue, ultra thick] (3,3) -- (4,3);
     \draw[blue, ultra thick] (2,2) -- (2,1);
     \draw[blue, ultra thick] (2,1) -- (4,1);
     \draw[blue, ultra thick] (2,1) -- (1,0);
     \draw[blue, ultra thick] (1,0) -- (0,0);
     \draw[blue, ultra thick] (1,0) -- (1,-1);
     \draw[blue, ultra thick] (1,-1) -- (0,-2);
     \draw[blue, ultra thick] (1,-1) -- (3,-1);
 \end{tikzpicture}
    \caption{Tropical elliptic curve $\tr(C)$}
  \end{minipage}
  \caption{Examples of classical and tropical elliptic curves}
  \label{fig:elliptic_curve_example}
\end{figure}

For an algebraic curve $\cC \subseteq \mathbb C\{\{t\}\}^n$ defined over the field of Puiseux series $\mathbb C\{\{t\}\}$, its tropicalization $\text{Trop}(\cC)$ is defined by removing any points where any coordinate is zero, then taking the valuations of each point coordinate-wise, and then taking the Euclidean closure, that is
$$\text{Trop}(\cC) := \overline{\{\text{Val}(p) : p \in \cC \cap (\mathbb C\{\{t\}\}^*)^n\}}$$
(as a set). This set is a union of bounded line segments we call edges and unbounded line segments we call rays. Now we assign a weight, which must be positive integer, to any edge or ray, as follows: take a point on the edge, and compute the initial ideal of the ideal of the curve with respect to the order function that assigns every coordinate function to the tropical coordinate of the point we have taken. Then the weight is defined as the sum of all multiplicities of all associated primes of the initial ideal (see \cite[Definition~3.4.3]{sturmfelsmaclagan}). In this paper, it will never be necessary to compute weights, since we will prove that the weights of all edges of the tropical curve that we consider are all equal to one.

We will need a notion of smoothness for a tropical curve - notions of smoothness exists for plane tropical curves (see for instance \cite{sturmfelsmaclagan}), but we define a notion of smoothness for tropical curves in any dimension, which is standard and equivalent to the other notions of smoothness for tropical curves. This definition makes use of tropical (iso)-morphisms, see for instance \cite[Definition 7.1.4]{mikhalkinrau}. A tropical morphism $f$ between two tropical varieties $T_1$ and $T_2$ is a continuous map which is locally affine $\mathbb Z$ - linear, that is, locally is given by composition of a $\mathbb Z$ - linear map with an arbitrary real translation. A tropical morphism $f:T_1 \rightarrow T_2$ is an isomorphism if there is a tropical morphism $g:T_2 \rightarrow T_1$ which is inverse to $f$, and such that the weights of each point in $T_1$ and $T_2$ agree under the maps.

\begin{definition}[Smooth Tropical Curve]\label{def:smooth_mikhalkin_rau}
    Let $T \subseteq \mathbb R^d$ be a tropical curve. We say that $T$ is \textit{smooth} if for each point $p \in T$ there exists a neighbourhood $U_p$ containing $p$, such that $U_p \cap T$ is tropically isomorphic to the intersection of a tropical line with an open set.
\end{definition}

The above definition can be thought of as `\textit{a tropical curve is smooth if it is locally isomorphic to a tropical line'}. To see why such a definition is important, let us see an example of an algebraic curve which is smooth, but whose tropicalization is \textit{not} smooth, and whose genus is not preserved under tropicalization.

We define the {\em tropical genus} of a connected tropical curve $T$ as the number of its (bounded) edges minus the number of its vertices plus one. 

\begin{remark}
    It is well-known that all weights of a smooth tropical curve are equal to one (see \cite[Definition~8.1.1]{mikhalkinrau}).
\end{remark}

\begin{example}\label{eg:Eg1}
    Consider the algebraic curve $V(y^3 + x^3 + 1)$, which is a smooth curve of genus one. The corresponding tropical curve is defined by the minimum $\min\{3y,3x,0\}$ being attained twice - this is precisely the same (as a set of points) as the usual tropical line in $\mathbb R^2$. It is, however, not smooth, since each edge has weight three. The underlying graph is a single vertex, and so has Betti number zero. So, the genus and the tropical genus are not equal.
\end{example}

\begin{figure}[H]
  \centering
  \begin{minipage}[b]{0.45\textwidth}
    \centering
  \begin{tikzpicture}
    \begin{axis}[
      xlabel={$x$},
      ylabel={$y$},
      xmin=-2, xmax=2,
      ymin=-2, ymax=2,
      axis equal image,
      axis lines=center,
      xtick=\empty,
      ytick=\empty,
      ]
      \draw[blue, ultra thick] (axis cs:-1,0.17) -- (axis cs:-1,-0.15);
      \addplot[domain=-2:2, samples=2000, blue, ultra thick] {(-1 - x^3)^(1/3)};
      \addplot[domain=-2:2, samples=1000, blue, ultra thick] {-(1 + x^3)^(1/3)};
    \end{axis}
  \end{tikzpicture}
  \caption{The curve $x^3 + y^3 + 1 = 0$}
  \end{minipage}
  \hfill
  \begin{minipage}[b]{0.45\textwidth}
    \centering
\begin{tikzpicture}
    \begin{axis}[
      xlabel={$x$},
      ylabel={$y$},
      xmin=-2, xmax=2,
      ymin=-2, ymax=2,
      axis equal image,
      axis lines=center,
      axis on top=true,
      xtick=\empty,
      ytick=\empty,
      ]
  \draw[blue, ultra thick] (axis cs:-2,-2) -- (axis cs:0,0);
  \draw[blue, ultra thick] (axis cs:2,0) -- (axis cs:0,0);
  \draw[blue, ultra thick] (axis cs:0,2) -- (axis cs:0,0);
  \node[above right] at (axis cs:0,1) {3};   
  \node[above right] at (axis cs:1,0) {3};
  \node[above right] at (axis cs:-1.2,-1) {3};
  \end{axis}
  \end{tikzpicture}
    \caption{The corresponding tropical curve}
  \end{minipage}
\end{figure}

\begin{theorem} \label{thm:smooth}
    Let $\cC$ be a smooth projective curve of genus $g$, such that $T:=\text{Trop}(\cC)$ is smooth. Then the tropical genus of $T$ is equal to $g$.
\end{theorem}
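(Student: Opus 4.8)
The plan is to compare the two genera through the Berkovich analytification $\mathcal{C}^{\mathrm{an}}$ over $\mathbb{C}\{\{t\}\}$ and its minimal skeleton $\Sigma$. First I would recall the standard structure theory: $\mathcal{C}^{\mathrm{an}}$ deformation retracts onto a finite metric graph $\Sigma$ carrying a vertex-genus function $v\mapsto g_v$, and the stable model yields the genus formula $g = b_1(\Sigma) + \sum_v g_v$. The tropicalization map factors as $\mathrm{trop} = (\mathrm{trop}|_\Sigma)\circ\rho$ through the retraction $\rho\colon \mathcal{C}^{\mathrm{an}}\to\Sigma$, so that the bounded complex of $T$ is exactly the image of the compact graph $\Sigma$ under the piecewise-$\mathbb{Z}$-linear map $\mathrm{trop}|_\Sigma$, while the unbounded rays of $T$ come from the points where $\mathcal{C}$ meets the coordinate hyperplanes.

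From this set-up one reads off the general specialization inequality $b_1(T)\le b_1(\Sigma)\le g$: the second inequality is immediate from the genus formula since each $g_v\ge 0$, and the first holds because the bounded complex of $T$ is the image of $\Sigma$ under the harmonic morphism $\mathrm{trop}|_\Sigma$, which cannot create new independent cycles. Since the tropical genus of the connected curve $T$ is precisely $b_1(T)$ (bounded edges minus vertices plus one), this already yields tropical genus $\le g$ in general, consistent with Example~\ref{eg:Eg1}, where the weight-three tropical line is a tree of tropical genus $0$ while $\mathcal{C}$ has genus $1$.

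The real work is to promote this to an equality using smoothness, and this is the step I expect to be the main obstacle. Smoothness of $T$ means that near every point $T$ is tropically isomorphic to an open subset of a tropical line; since a tropical line is a tree that is trivalent at each vertex with the three primitive edge directions there summing to zero, every vertex of $T$ is trivalent with a unimodular weight-one local model (in particular all weights equal $1$, by the Remark). The heart of the argument is the local non-archimedean comparison of Baker--Payne--Rabinoff and Jell: this weight-one unimodular local structure is exactly the condition forcing $\mathrm{trop}|_\Sigma$ to be a local isometry, so that $\Sigma\to T$ is an isomorphism of metric graphs (the tropicalization is faithful), and simultaneously forcing the stable reduction to be totally degenerate, i.e. every $g_v=0$ and $\mathcal{C}$ a Mumford curve. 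I would isolate this implication as the key lemma and invoke \cite{jell2020constructing} for it, since it is precisely where the non-archimedean machinery is required.

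Granting the key lemma, the proof closes immediately: faithfulness gives $b_1(T)=b_1(\Sigma)$, and the Mumford condition $\sum_v g_v=0$ collapses the genus formula to $g=b_1(\Sigma)$. Combining, the tropical genus $b_1(T)=b_1(\Sigma)=g$, as claimed. Thus the entire difficulty is concentrated in verifying that tropical smoothness forces both faithfulness and total degeneracy; everything around it is just the skeleton genus formula and the elementary direction of the Betti-number inequality.
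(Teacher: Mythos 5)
Your proposal is correct and follows essentially the same route as the paper: both reduce the statement to the two facts from \cite{jell2020constructing} that tropical smoothness forces the tropicalization to be (fully) faithful, so that $T$ is identified with a Berkovich skeleton of $\cC^{\mathrm{an}}$, and forces $\cC$ to be a Mumford curve, so that the skeleton's first Betti number equals $g$. The extra material you include (the specialization inequality $b_1(T)\le b_1(\Sigma)\le g$ and the genus formula $g=b_1(\Sigma)+\sum_v g_v$) is a sound and slightly more explicit framing of the same argument, and you correctly identify that all the non-archimedean difficulty is concentrated in the two cited implications.
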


\begin{proof}
Essentially, this is a result in \cite{jell2020constructing}.
The proof uses several concepts from non-archimedean analytic geometry we did not define because we will not use them anywhere else (fully faithful tropicalization, analytification, Berkovich skeleta, Mumford curves). We refer to \cite{jell2020constructing,berkovich1990} for the definitions. 

By \cite[Theorem~5.7]{jell2020constructing}, smoothness of $T$ implies that the tropicalization is fully faithful. In particular, the tropical curve $T$ is homeomorphic to the Berkovich skeleton of the analytification of $\cC$ with respect to the given tropical embedding. 
By \cite[Theorem~5.6]{jell2020constructing}, the smoothness of the tropicalization also implies that $\cC$ is a Mumford curve, so all its Berkovich skeleta have genus $g$. Therefore $T$ has also genus $g$ as a graph, and the claim follows from Euler's formula.
\end{proof}

\section{The Tropical Generic Fiber} \label{sec:genus}

In this section we explain the tropicalization of the generic fiber of $f_G$, and prove that the tropical curve we obtain is smooth. (Note that this statement is stronger than smoothness of the generic fiber itself.)

\subsection{Tropicalization}

In its standard form, the generic fiber of $f_G$ is defined by distance equations of the form
$$(x_i-x_j)^2+(y_i-y_j)^2 = \lambda_e,$$
where $e=\{i,j\}$ runs over all edges except $\{1,2\}$, and $\lambda_e\in{\mathbb C}$ is a generic parameter.
It will be convenient for us to make a linear change of coordinates (see for instance \cite{GRASEGGER2020111713}), which will simplify all such equations. We define this change of variables in the following way. For all pairs $(i,j)$ such that $e:= \{i,j\} \in E\setminus\{\{1,2\}\}$, and $i < j$, we define
\begin{gather*}
    s_e := (x_j - x_i) + i(y_j - y_i) \\
    r_e := (x_j -x_i) - i(y_j - y_i) 
\end{gather*}
This gives us $2|E|-2$ variables. After making this change, we notice that for any cycle $\{i_1,i_2,...,i_k,i_{k+1} = i_1\}$ with edges $e_1=\{i_1,i_2\},\dots,e_k=\{i_k,i_1\}$ in $G$, we have that
$$\sum_{l=1}^k \pm s_{e_l} = \sum_{l=1}^k \pm r_{e_l} = 0.$$
The sign in the $l$-th summand is $+1$ if $i_l<i_{l+1}$ and $-1$ if $i_l>i_{l+1}$. The edge $e_0=\{1,2\}$ may occur in the cycle, and in this case we add a constant of $\pm 1$ to the sum. After making this change, the generic fiber of the edge map (i.e. the configuration space) is defined by the equations
$$s_{e_0}=r_{e_0} = 1, \quad s_{e}r_{e} = \lambda_{e}, \quad  \sum_{e \in C}s_{e} = \sum_{e \in C}r_{e} = 0,$$
where $C$ ranges over every cycle in the graph. Let us denote this fiber by $\cC$. It is isomorphic (via projection to the $s$-coordinates) to the intersection of two manifolds in ${\mathbb C}^n$, where $n:=|E|-1$. The first manifold, which we call $L_1$, is the intersection of the linear subspace defined by all equations of the form $\sum_{e \in C} s_{e} = 0$ with $\mathbb C^n$, and the second, denoted by $L_2$, is the image of $L_1$ under the map $({\mathbb C}^\ast)^n\to({\mathbb C}^\ast)^n$ given by $$(s_1,\dots,s_n)\mapsto (\lambda_1/s_1,\dots,\lambda_n/s_n).$$
In order to tropicalize the generic fiber, we first change the field from ${\mathbb C}$ to $\mathbb C \{ \{ t \} \}$, the field of complex Puiseux series. This does not change the number of irreducible components or the genus of $\cC$, as the two fields are actually isomorphic. We recall that the tropicalization of an algebraic variety  $V\subset (\mathbb C \{ \{ t \} \}^\ast)^n$ is defined as the Euclidean closure of the set of all value vectors of points in $V$. The value of a nonzero element in $\mathbb C \{ \{ t \} \}$ is its order with respect to $t$, which is simply a rational number. 

We tropicalize the defining equations for the generic fiber $\cC$ as follows. We label the (non-normalised) edges from $1$ to $n=|E|-1$; single subscripts now denote edges, and the normed edge is given index $0$. For each parameter $\lambda_e$, we just remember its value $w_e$ -- the numbers $w_1,\dots,w_n$ will be the tropical parameters of the tropical equations. Instead of the algebraic variables $s_1,\dots,s_n,t_1,\dots,t_n$ we take tropical variables $u_1,\dots,u_n,v_1,\dots,v_n$ ranging over ${\mathbb R}$ - the orientation of edges we had previously disappears at this point, since the valuation of a Puiseux series is not changed by scalar multiplication. The normed parameter $w_0$ corresponding to the edge $\{1,2\}$ and the norming variables $u_0,v_0$ are set to zero - the valuation of their algebraic value. Multiplication is replaced by addition, and a sum being zero is replaced by the condition that the minimum in some sequence of numbers is attained twice. We obtain the following system.
\begin{gather*}
    u_0 = v_0 = 0 \\
    \forall e \in E \setminus\{\{1,2\}\}, \quad u_e + v_e = w_e \\
    \text{For all cycles }C \text{ in } G, \ \ \min_{e \in C}\{u_e\} \text{ and } \min_{e \in C}\{v_e\} \text{ are attained at least twice}.
\end{gather*}
Furthermore, once the weights $w_e$ have been fixed, we may eliminate all of the $v_e$ variables via $v_e = w_e - u_e$, further reducing the ambient space to $\mathbb R^{n}$, given by the now fully reduced conditions 
\begin{gather*}
    u_0 = 0 \\
    \text{For all cycles }C \text{ in } G, \ \ \min_{e \in C}\{u_e\} \text{ and } \min_{e \in C}\{w_e -u_e\} \text{ are attained at least twice.}
\end{gather*}
The tropicalization of the generic fiber $\cC$ is contained in the zero set of the tropical equations above. We will see that it is actually equal to this zero set. To do this, we will use the following lemma.

\begin{lemma}\label{lem:bergman}
The tropicalization of the linear variety $L_1$ is equal to the tropical variety $X$ defined by the tropical equations
\begin{gather*}
    u_0 = 0 \\
    \text{For all cycles }C \text{ in } G, \ \ \min_{e \in C}\{u_e\} \text{ is attained twice.}
\end{gather*}
\end{lemma}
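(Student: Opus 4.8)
The plan is to recognise $L_1$ as (an affine chart of) the row space of the vertex--edge incidence matrix of $G$, so that Lemma~\ref{lem:bergman} becomes a special case of the identification of the tropicalisation of a linear space with the Bergman fan of its matroid. First I would \emph{homogenise}: reintroduce the normed edge $e_0$ as an honest coordinate $s_0$ and let $\bar L_1\subseteq\mathbb C\{\{t\}\}^{|E|}$ be the linear subspace cut out by all relations $\sum_{e\in C}\pm s_e=0$ (now genuinely homogeneous, with $s_0$ free). Writing $z_k$ for the complex coordinate of vertex $k$ and $s_e=z_j-z_i$, one sees that $\bar L_1$ is exactly the cut space of $G$, i.e. the row space of the signed incidence matrix $B$. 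The column matroid of $B$ is the graphic matroid $M(G)$: a set of edges is independent iff it is a forest, so the circuits of $M(G)$ are precisely the simple cycles of $G$. Since the entries of $B$ lie in $\{-1,0,1\}$, all of valuation zero, the relevant realisation is over the trivially valued subfield and $\text{Trop}(\bar L_1)$ is the constant-coefficient Bergman fan.

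The core of the argument is then a squeeze between three sets, using the theorem of \cite{sturmfelsmaclagan} that $\text{Trop}(\bar L_1)=B(M(G))$ together with its circuit description
\[
  B(M(G))=\bigl\{\,\bar u\in\mathbb R^{|E|}:\ \text{for every circuit }C,\ \min_{e\in C}\bar u_e\ \text{is attained at least twice}\,\bigr\}.
\]
Let $X^{\mathrm{cyc}}$ and $X^{\mathrm{circ}}$ denote the solution sets of the tropical min-twice equations taken over all cycles, respectively all circuits (simple cycles), of $G$. First, $\text{Trop}(\bar L_1)\subseteq X^{\mathrm{cyc}}$: every cycle yields a genuine linear relation on $\bar L_1$, so for $s\in\bar L_1$ the ultrametric inequality forces the minimum of $\text{Val}(s_e)$ over $e\in C$ to occur at least twice. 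Second, $X^{\mathrm{cyc}}\subseteq X^{\mathrm{circ}}$ trivially, since circuits are among the cycles and thus impose fewer constraints. Third, $X^{\mathrm{circ}}=B(M(G))=\text{Trop}(\bar L_1)$ by the theorem. These three inclusions collapse to equalities, giving $\text{Trop}(\bar L_1)=X^{\mathrm{cyc}}$ without any separate argument that circuit conditions imply all-cycle conditions.

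It remains to pass from $\bar L_1$ back to $L_1$ and to install $u_0=0$. Because $\bar L_1$ is scaling invariant and $s_0$ is nonzero on the chart of interest, any $s\in\bar L_1$ with $\text{Val}(s_0)=0$ can be rescaled to $s/s_0\in\{s_0=1\}\cap\bar L_1$, which projects into $L_1$ with unchanged valuations in coordinates $1,\dots,n$. This yields $\text{Trop}(L_1)=\{(u_1,\dots,u_n):(0,u_1,\dots,u_n)\in\text{Trop}(\bar L_1)\}$, the slice of the Bergman fan by $u_0=0$; since the all-ones direction lies in the lineality space of $B(M(G))$, this slice is transverse and loses no information. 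Combined with the equality $\text{Trop}(\bar L_1)=X^{\mathrm{cyc}}$ from the previous step, the right-hand side is exactly the set $X$ of the lemma (the cycle conditions through $e_0$ now read off with $u_0=0$ substituted).

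I expect the genuine obstacle to be the third inclusion $X^{\mathrm{circ}}\subseteq\text{Trop}(\bar L_1)$, i.e. the \emph{lifting} of a point satisfying the tropical circuit equations to an actual Puiseux-series point of $\bar L_1$ with the prescribed valuations; a priori the set-theoretic solution set of the tropical equations could be strictly larger than the true tropicalisation, and ruling this out is precisely where the matroid/Bergman-fan machinery is essential. Everything else is bookkeeping: the passage between the homogeneous cut space and the affine normalisation $s_0=1\leftrightarrow u_0=0$, and the observation that it suffices to test simple cycles.
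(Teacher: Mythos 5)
Your proposal is correct and takes essentially the same route as the paper: the paper's (much terser) proof likewise invokes the fact that the tropicalization of a linear space is cut out by its circuits, i.e.\ the Bergman fan of the graphic matroid, and cites Maclagan--Sturmfels, Example~4.2.14, to identify those circuits with the cycle conditions. Your additional bookkeeping (homogenisation, the squeeze between all-cycle and circuit conditions, and the transverse $u_0=0$ slice) simply makes explicit the steps the paper leaves implicit.
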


\begin{proof}
Because $L_1$ is a linear space, its tropicalization is defined by all circuits, i.e. minimal supports of linear equations in the ideal of $L_1$. By \cite[Example~4.2.14]{sturmfelsmaclagan}, the circuits are exactly the cycle conditions. 
\end{proof}

In Lemma~\ref{lem:bergman}, we implicitly defined the tropical variety $X$. Now we define $Y$ as the tropicalization of $L_2$. Clearly, this is just the image of $X$ under the map from ${\mathbb R}^n$ to itself that sends $(x_1,...,x_n)$ to $(w_1-x_1,...,w_n-x_n)$. This map is an example of a tropical isomorphism, to be defined in the next subsection; we will need tropical isomorphisms in order to show that the tropicalisation of $\cC$ is tropically smooth.

A vector $w\in{\mathbb R}^n$ is called {\em tropically generic} if its coordinates are linearly independent over ${\mathbb Q}$. If $w$ is tropically generic, then it cannot be a vector of values, but the definitions of the tropical varieties $X$ and $Y$ still remain valid.

If $X,Y\subset{\mathbb R}^n$ are tropical varieties, and $p\in X\cap Y$, then we say that $X$ and $Y$ intersect transversally at $p$ if $p$ is contained in a face $\sigma$ of $X$ and in a face $\tau$ of $Y$ such that the affine span of $\sigma\cup\tau$ is equal to ${\mathbb R}^n$. We say that $X$ and $Y$ intersect transverally if $X$ and $Y$ intersect transverally at every intersection point. The fact that in our case $X$ and $Y$ intersect transversally is very important to our proof, and is given by the following result.

\begin{lemma} \label{thm:transversal}
If $w$ is tropically generic, then $X$ and $Y$ intersect transversally at any of its intersection points.
\end{lemma}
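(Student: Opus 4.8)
The plan is to read off the local structure of $X$ from its description as the Bergman fan of the graphic matroid $M(G)$, which is legitimate because by Lemma~\ref{lem:bergman} the tropical variety $X$ is cut out precisely by the cycle (circuit) conditions. Concretely, every cone of $X$ corresponds to a flag of flats $\mathcal{F}\colon F_1\subsetneq\cdots\subsetneq F_k$; its linear span is $\operatorname{span}\{\mathbf 1_{F_1},\dots,\mathbf 1_{F_k}\}$ taken modulo the all-ones lineality $\mathbb R\mathbf 1$ (we identify $\mathbb R^n$ with the slice $u_0=0$), where $\mathbf 1_F\in\mathbb R^E$ denotes the indicator vector of $F\subseteq E$; and a point in the relative interior of this cone has coordinates that are constant on each block $B_i=F_{i+1}\setminus F_i$ of the associated partition of $E$, with pairwise distinct block values. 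Because $Y=w-X$, every face $\tau$ of $Y$ is a translate $w-\sigma'$ of a face $\sigma'$ of $X$ and has the same linear span as $\sigma'$. So, given $p\in X\cap Y$, I would take $\sigma$ to be the unique face of $X$ whose relative interior contains $p$, and $\tau=w-\sigma'$, where $\sigma'$ is the face of $X$ whose relative interior contains $w-p$, with corresponding flag $\mathcal{G}\colon G_1\subsetneq\cdots\subsetneq G_l$. The goal is then exactly to prove $V_\sigma+V_\tau=\mathbb R^n$, where $V_\sigma,V_\tau$ are the linear spans of the two faces.

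The heart of the argument is a reformulation that converts genericity of $w$ into a purely combinatorial condition. Writing $\alpha_{i(e)}$ for the value of $p$ on the $\mathcal F$-block containing $e$ and $\beta_{j(e)}$ for the value of $w-p$ on the $\mathcal G$-block containing $e$, the defining relations give $w_e=\alpha_{i(e)}+\beta_{j(e)}$ for every edge $e$. I would encode this in a bipartite graph $H$ whose vertices are the blocks of $\mathcal F$ together with the blocks of $\mathcal G$ and whose edges are the elements of $E$, each $e$ joining its $\mathcal F$-block to its $\mathcal G$-block. A cycle in $H$ is even, and summing the relation $w_e=\alpha_{i(e)}+\beta_{j(e)}$ with alternating signs around such a cycle telescopes the $\alpha$'s and $\beta$'s to zero, producing a nontrivial relation $\sum\pm w_e=0$ with coefficients $\pm1$. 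Since the coordinates of $w$ are linearly independent over $\mathbb Q$, no such relation exists, so $H$ must be a forest (in particular it has no repeated edge, which rules out two edges sharing both of their blocks).

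To finish I would convert the forest condition back into the desired spanning statement by a duality argument. The orthogonal complement of $\{\mathbf 1_{F_1},\dots,\mathbf 1_{F_k},\mathbf 1_{G_1},\dots,\mathbf 1_{G_l},\mathbf 1_E\}$ consists of the vectors $v\in\mathbb R^E$ whose sum over every $\mathcal F$-block and every $\mathcal G$-block vanishes; interpreting $v$ as an assignment of weights to the edges of $H$, this says precisely that $v$ is a divergence-free flow, i.e.\ an element of the cycle space of $H$. A forest has trivial cycle space, so this orthogonal complement is zero, the indicator vectors span $\mathbb R^E$, and hence $V_\sigma+V_\tau=\mathbb R^n$. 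This is exactly transversality of $X$ and $Y$ at $p$, and since $p$ was arbitrary the lemma follows.

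I expect the main obstacle to be locating the correct bridge between the analytic genericity hypothesis and the linear algebra of the two Bergman cones; the bipartite graph $H$ is the device that makes both implications---``generic $\Rightarrow$ forest'' and ``forest $\Rightarrow$ spanning''---short, so the real work lies in setting it up and in pinning down the conventions for the Bergman cones (their linear spans, and the fact that relative-interior points have constant, pairwise distinct block values) so that the identity $w_e=\alpha_{i(e)}+\beta_{j(e)}$ holds exactly. A secondary point to verify is that taking the relative-interior faces $\sigma,\tau$ is the right choice and that no low-dimensional intersection point escapes the argument; here the forest bound automatically forces $\dim V_\sigma+\dim V_\tau$ to be large enough, so this should not require separate treatment.
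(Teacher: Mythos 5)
Your proof is correct, but it takes a genuinely different and considerably heavier route than the paper's. The paper never opens up the Bergman fan: it only uses that every face $\sigma$ of $X$ has $0$ in its closure, so its affine span $L_\sigma$ is a linear subspace defined over $\mathbb{Q}$, and likewise $L_\tau-w$ for a face $\tau$ of $Y=w-X$; since $p\in L_\sigma$ and $w-p\in L_\tau-w$, the rational subspace $W=L_\sigma+(L_\tau-w)$ contains $w$, and genericity of $w$ forces $W=\mathbb{R}^n$. That is the entire argument, and it applies verbatim to any pair of rational fans through the origin with one translated by a generic vector. You instead invoke the Ardila--Klivans fine structure (flags of flats, constancy on blocks with distinct values), encode the interaction of the two block partitions in a bipartite incidence graph $H$, show that genericity forces $H$ to be a forest via the telescoping $\pm1$ relation, and convert the forest condition back into spanning through the cycle space. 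These steps all check out: a cycle of $H$ through the normalised edge $e_0$ still yields a nontrivial relation among $w_1,\dots,w_n$ because $w_{e_0}=0$; the kernel of the unsigned incidence matrix of a bipartite graph is indeed its cycle space; and the passage from $\mathbb{R}^E$ to the slice $u_0=0$ is harmless because $\mathbf{1}_E$ lies in both spans (a point worth one explicit sentence, since intersecting with a hyperplane does not commute with sums in general). What your version buys is extra combinatorial information---the forest $H$ records exactly how the two partitions interlock, and its component count controls the local dimension of $X\cap Y$ at $p$---which is more than the lemma needs but is the kind of data relevant to the smoothness analysis later in the paper; the cost is importing the fine fan structure, which the paper's five-line genericity argument avoids entirely.
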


\begin{proof}
 Let $p\in X\cap Y$. Let $\sigma$ be a face of $X$ and let $\tau$ be a face of $Y$, both containing $p$.
  Let $L_\sigma$ be the affine span of $\sigma$ and let $L_\tau$ be the affine span of $\tau$. 
  Then $L_\sigma$ contains zero, because zero is in the closure of every face of $X$. The translate
  $L_\tau-w$ also contains zero, by the same reason. So, both $L_\sigma$ and $L_\tau-w$ are
  vector spaces. Moreover, both vector spaces are defined over ${\mathbb Q}$. Let $W:=L_\sigma+(L_\tau-w)$.
  Since $p\in L_\sigma$ and $-p\in L_\tau$, it follows that $w\in W$. But $w$ is generic, and
  $W$ is defined over ${\mathbb Q}$. It follows that $W={\mathbb R}^n$, which means $X$ and
  $Y$ intersect transversally.   
\end{proof}

As a consequence of transversality, it follows from \cite[3.4.12]{sturmfelsmaclagan} that the tropicalization of the generic fiber is defined by the constraints we gave before Lemma~\ref{lem:bergman}. 

\begin{corollary} \label{cor:trans}
The tropicalization of the generic fiber $\cC$ is equal to the intersection of $X$ and $Y$ (as a set).
\end{corollary}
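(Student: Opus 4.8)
The plan is to prove the set equality by a double inclusion, using the paper's identification of $\cC$ (via projection to the $s$-coordinates) with $L_1\cap L_2$, so that $\tr(\cC)=\tr(L_1\cap L_2)$, together with $X=\tr(L_1)$ from Lemma~\ref{lem:bergman} and $Y=\tr(L_2)$. The inclusion $\tr(\cC)\subseteq X\cap Y$ is the formal half and needs no genericity: for any $p\in (L_1\cap L_2)\cap(\mathbb C\{\{t\}\}^\ast)^n$ the value vector $\text{Val}(p)$ lies in both $\tr(L_1)=X$ and $\tr(L_2)=Y$, so the set of all such value vectors sits inside $X\cap Y$; since $X\cap Y$ is closed, passing to the Euclidean closure gives $\tr(\cC)\subseteq X\cap Y$.

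The content is the reverse inclusion $X\cap Y\subseteq \tr(\cC)$, for which I would appeal to transversality. First I would arrange that the tropical parameter $w$ is tropically generic: although the valuations of the edge parameters are a priori rational, the $\lambda_e$ are algebraically independent over $\mathbb Q$, so I may realise the configuration over a valued field whose value group is large enough and assign the $\lambda_e$ valuations $w_e$ that are linearly independent over $\mathbb Q$; this is legitimate because neither the number of irreducible components nor the genus of $\cC$ depends on the choice of generic parameters. With $w$ tropically generic, Lemma~\ref{thm:transversal} shows that $X$ and $Y$ meet transversally at every point of $X\cap Y$, so at each such point the ambient intersection is dimensionally proper. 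I would then invoke \cite[3.4.12]{sturmfelsmaclagan}: a transverse intersection point of two tropicalizations lies in the tropicalization of their intersection, and carries multiplicity equal to the product of the two local multiplicities. This puts every point of $X\cap Y$ into $\tr(L_1\cap L_2)=\tr(\cC)$, completing the reverse inclusion. As a byproduct, since $X$ and $Y$ are Bergman fans of graphic matroids and hence carry all weights equal to one, the product formula shows every weight of $\tr(\cC)$ equals one, confirming the earlier remark that weights never need to be computed.

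I expect the main obstacle to be the genericity bookkeeping rather than the intersection-theoretic input. One must check that ``generic complex parameters'' can genuinely be promoted to a tropically generic valuation vector $w$, so that the very varieties $X$ and $Y$ appearing in Lemma~\ref{thm:transversal} are the ones whose intersection computes $\tr(\cC)$; in particular the value group $\mathbb Q$ of the Puiseux field is too small to host $n$ reals independent over $\mathbb Q$, so some care with the choice of valued field is needed. A secondary point is to confirm that the hypotheses of \cite[3.4.12]{sturmfelsmaclagan} are met uniformly over all of $X\cap Y$ (transversality at every intersection point, not merely one) and that the constant-coefficient, torus-ambient setting of $L_1$ and $L_2$ matches the statement of that result. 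Once these are in place, combining the two inclusions yields $\tr(\cC)=X\cap Y$ as sets, which is exactly the claim.
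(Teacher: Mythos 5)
Your proposal is correct and follows essentially the same route as the paper: the paper derives this corollary in one line from Lemma~\ref{thm:transversal} together with \cite[3.4.12]{sturmfelsmaclagan}, which is exactly the transversality-plus-intersection argument you give (the easy inclusion $\tr(\cC)\subseteq X\cap Y$ being left implicit there). Your observation that the value group $\mathbb{Q}$ of the Puiseux field cannot host a tropically generic $w$ is a genuine subtlety that the paper only half-acknowledges, and your fix --- enlarging the valued field, or equivalently noting that transversality already holds for $w$ outside a lower-dimensional polyhedral locus --- is the right way to close it.
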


Tropical varieties are defined as {\em weighted} polyhedral complexes, i.e., the maximal faces may come with higher multiplicity. Corollary~\ref{cor:trans} does not exclude multiple edges, but we will see in the next subsection that they will not occur.

\subsection{Tropical Smoothness}

For our discussion, we know by Sard's theorem that the generic fiber is smooth and with the help of the following lemma we also show that the corresponding tropical curve is smooth as well.

\begin{lemma}\label{lem:tropicalcurveissmooth}
    Let $G$ be a 1-dof graph, and let $\cC$ be its generic fiber. Then the tropicalization $\text{Trop}(\cC)$ is a smooth tropical curve.
\end{lemma}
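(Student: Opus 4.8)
The plan is to prove that $\text{Trop}(\cC)$ is smooth by using the local characterization from Definition~\ref{def:smooth_mikhalkin_rau}: at every point $p \in \text{Trop}(\cC)$, I must exhibit a neighborhood $U_p$ such that $U_p \cap \text{Trop}(\cC)$ is tropically isomorphic to an open piece of a tropical line. By Corollary~\ref{cor:trans}, we know $\text{Trop}(\cC) = X \cap Y$ as a set, where $X = \text{Trop}(L_1)$ is the Bergman fan of the graphic matroid and $Y$ is its image under the tropical isomorphism $\varphi\colon (x_1,\dots,x_n) \mapsto (w_1 - x_1, \dots, w_n - x_n)$. Since $w$ is tropically generic, Lemma~\ref{thm:transversal} guarantees that $X$ and $Y$ meet transversally at every intersection point. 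The key idea is that transversality of two Bergman-fan-type tropical varieties of complementary behavior forces the local structure of the intersection to be as simple as possible.

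First I would fix an arbitrary point $p \in X \cap Y$ and pass to the local fan structure: near $p$, both $X$ and $Y$ are (translates of) rational polyhedral fans, namely the local fans $\text{star}_p(X)$ and $\text{star}_p(Y)$. Because $X$ is one-dimensional as an intersection with $Y$ but the ambient complex is the $(n-1)$-dimensional Bergman fan of the cycle matroid, I need to verify that $X \cap Y$ is genuinely $1$-dimensional and that near $p$ it looks like a collection of rays emanating from $p$. Transversality from Lemma~\ref{thm:transversal} says the faces $\sigma \subseteq X$ and $\tau \subseteq Y$ through $p$ satisfy $L_\sigma + L_\tau = \mathbb{R}^n$; a dimension count then pins the local intersection to be transverse in the stable-intersection sense, so each local branch of $X \cap Y$ through $p$ arises from a transverse meeting of a face of $X$ with a face of $Y$ and carries weight one (invoking the stable intersection / fan-displacement formula, consistent with the Remark that smooth curves have all weights one).

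Next I would identify the combinatorial type of the star at $p$. The essential point is that $X$ is the Bergman fan $B(M_G)$ of the graphic matroid of $G$ (via Lemma~\ref{lem:bergman}), and the local cone structure of a Bergman fan at any point is itself a product of a lineality space with the Bergman fan of a matroid minor. Intersecting transversally with $Y$ cuts this down to dimension one, and I claim the resulting one-dimensional fan at $p$ has exactly three rays with primitive generators summing to zero and each of weight one—precisely the combinatorial type of a tropical line's vertex (or two rays, if $p$ is an interior point of an edge, giving a straight line, which is trivially line-like). Establishing that exactly three balanced rays of weight one emerge is what certifies local isomorphism to a tropical line via the appropriate $\mathbb{Z}$-linear map sending the three primitive directions to the standard tropical line directions $e_1, e_2, -e_1-e_2$.

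The main obstacle I anticipate is controlling the valence and the primitive ray directions at each vertex $p$: transversality immediately gives weight-one and the correct dimension, but showing the local fan is \emph{trivalent} with the right lattice structure (rather than, say, a four-valent vertex or a vertex with a nontrivial lattice index) requires a genuinely combinatorial argument about how faces of the graphic Bergman fan $X$ and its mirror $Y$ can meet. I would handle this by analyzing which pairs of cones $(\sigma, \tau)$ with $L_\sigma + L_\tau = \mathbb{R}^n$ and $\dim(\sigma \cap \tau) = 1$ actually occur, using the flat/cyclic-flat description of faces of $B(M_G)$; the transversality constraint together with genericity of $w$ should force the minimal possible codimensions, yielding the trivalent weight-one picture. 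Once local line-likeness is verified at every $p$, the definition of smoothness is satisfied and the lemma follows.
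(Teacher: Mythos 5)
Your setup is the same as the paper's (write $\mathrm{Trop}(\cC)=X\cap Y$ via Corollary~\ref{cor:trans}, invoke transversality from Lemma~\ref{thm:transversal}, and analyse the local structure at each point $p$), but the crucial step is missing and the target you aim for is wrong. You explicitly defer the identification of the local fan to ``a genuinely combinatorial argument about how faces of the graphic Bergman fan $X$ and its mirror $Y$ can meet,'' which you do not carry out -- that \emph{is} the content of the lemma, so as written the proof has a hole exactly where the work is. Worse, the picture you say this analysis ``should'' yield -- exactly three rays at each vertex, matching ``the combinatorial type of a tropical line's vertex'' -- is not what Definition~\ref{def:smooth_mikhalkin_rau} requires and is not what actually happens. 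A tropical line in $\mathbb{R}^n$ can have a vertex of valence up to $n+1$ (the star with rays $e_1,\dots,e_n,-\sum e_i$, i.e.\ the Bergman fan of $U_{2,n+1}$), and the curves $X\cap Y$ here genuinely have such higher-valent vertices: when $p$ sits on a codimension-one face of the graphic Bergman fan, the link of that face is a rank-one Bergman fan with possibly many more than three rays, and transversality with a maximal face of $Y$ does not cut this down to valence three. An argument organised around forcing trivalence and the directions $e_1,e_2,-e_1-e_2$ would therefore fail on easy examples such as $K_{2,3}$.

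The paper avoids the combinatorial classification of cone pairs entirely by a short trick you should be aware of. Using the tropical isomorphism $\underline{u}\mapsto \underline{w}-\underline{u}$ that swaps $X$ and $Y$, one reduces to the case where $p$ lies in a \emph{maximal} face of $Y$, so that near $p$ the complex $Y$ is a single affine subspace. The point reflection $\sigma:z\mapsto 2p-z$ then fixes $Y$ in a neighbourhood of $p$, i.e.\ $U_p\cap Y=U_p\cap\sigma(Y)$, while globally $\sigma(Y)=2p-w+X$ is a translate of $X$ and hence the tropicalization of a (rescaled) linear space $L$. Applying \cite[Theorem~3.4.12]{sturmfelsmaclagan} to the transverse pair $X=\mathrm{Trop}(L_1)$ and $\sigma(Y)=\mathrm{Trop}(L)$ gives $U_p\cap\mathrm{Trop}(\cC)=U_p\cap\mathrm{Trop}(L_1\cap L)$, and $L_1\cap L$ is a line by the dimension count, so the local model is a tropical line \emph{by construction} -- no determination of valences, primitive directions, or lattice indices is needed. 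If you want to salvage your route, you would have to replace the trivalence claim by a proof that the star of $p$ in $X\cap Y$ is the star of a point on some tropical line, which is essentially this reflection argument in disguise.
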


\begin{proof}
Recall that we may consider $\text{Trop}(\cC)$ as being defined by the conditions
\begin{gather*}
u_0 = 0 \\
\text{For all cycles }C \text{ in } G, \ \ \min_{e \in C}\{u_e\} \text{ and } \min_{e \in C}\{w_e -u_e\} \text{ are attained twice.}
\end{gather*}
This can further be considered as the intersection of the two tropical varieties $X$ and $Y$ defined by
\begin{gather*}
X = \left\{ \underline{u} \in \mathbb R^{|E|-1} : \text{for all cycles }C \text{ in } G, \ \ \min_{e \in C}\{u_e\} \text{ is attained twice}\right\}\\
Y = \left\{ \underline{u} \in \mathbb R^{|E|-1} : \text{for all cycles }C \text{ in } G, \ \ \min_{e \in C}\{w_e -u_e\} \text{ is attained twice}\right\}.
\end{gather*}
A main observation is that $Y$ is the image of $X$ after reflection in each coordinate axis ($\underline{u} \rightarrow -\underline{u}$) and translation by the weight vector $\underline{w} = (w_1,...,w_n)$. In order to prove that $V$ is smooth, we will use the following proposition \cite[Theorem 3.4.12]{sturmfelsmaclagan}.
\begin{proposition}\label{prop:transverseintersection}
    Let $W_1$ and $W_2$ be subvarieties of $(\mathbb C\{\{t\}\}^*)^n$. If the two tropical varieties $\text{Trop}(W_1)$ and $\text{Trop}(W_2)$ intersect transversally, then we have
    $$\text{Trop}(W_1) \cap \text{Trop}(W_2) = \text{Trop}(W_1 \cap W_2).$$
\end{proposition}
Therefore, as long as two tropical varieties intersect transversally, their intersection is equal to the tropicalization of the intersection of the two algebraic varieties. Recall that in our case, the tropical variety $X$ is the tropicalization of the linear variety $L_1$ defined as
$$L_1 := \left\{\underline{s} \in (\mathbb C\{\{t\}\}^*)^n : \text{for all cycles }C \text{ in } G, \ \ \sum_{e \in C}s_e = 0 \right\}.$$
The idea of our proof will be to show that for each point $p \in \text{Trop}(\cC)$, locally around $p$, $\text{Trop}(\cC)$ can be described as the transverse intersection of the tropicalization of two linear varieties which intersect in a line; therefore by Proposition \ref{prop:transverseintersection}, $\text{Trop}(\cC)$ is a tropical line locally around $p$, and is therefore smooth.

The two tropical varieties $X$ and $Y$ can also be considered as polyhedral complexes; by the genericity of the fiber, we may assume that no two codimension one faces of these complexes intersect. Therefore, a point $p \in X \cap Y$ is in one of the following three classes:
\begin{enumerate}
    \item $p$ lies in the intersection of a full dimensional face of $X$ and a full dimensional face of $Y$.
    \item $p$ lies in the intersection of a codimension one face of $X$ and a full dimensional face of $Y$.
    \item $p$ lies in the intersection of a full dimensional face of $X$ and a codimension one face of $Y$.
\end{enumerate}
We now make a further reduction; we note that via the map $\underline{u} \mapsto \underline{w} - \underline{u}$, which is a tropical isomorphism, we may interchange $X$ and $Y$. This allows us to reduce to only Cases 1 and 2, by implicitly applying this isomorphism to any Case 3 occurrence, changing it to Case 2.

In both cases, we take some small neighbourhood $U_p$ of $p$. We will reflect, pointwise, all of $Y$ through the point $p$ - let us call this map $\sigma$. Note that this is a tropical isomorphism, and in particular $U_p \cap Y = U_p \cap \sigma(Y)$. Therefore the intersection $U_p \cap X \cap Y$ is exactly $U_p \cap X \cap \sigma(Y)$. Furthermore, we know that $\sigma(Y)$ is simply a translation of $X$, and so both are tropicalizations of the linear spaces $X = \text{Trop}(L_1)$, $\sigma(Y) = \text{Trop}(L)$, for some linear space $L$. Since this intersection is transverse by Prop \ref{prop:transverseintersection}, we have that $X \cap \sigma(Y) = \text{Trop}(L_1 \cap L)$, which is the tropicalization of a line, as needed. This concludes the proof of Lemma \ref{lem:tropicalcurveissmooth}.
\end{proof}

As both the algebraic curve $\cC$ and its tropical counterpart $\text{Trop}(\cC)$ are smooth, we obtain the following corollary as a consequence of Theorem~\ref{thm:smooth}. We will abuse notation and let $\cC$ now refer to any single irreducible component of the generic fiber.

\begin{corollary}
Let $\cC$ be an irreducible component of the generic fiber with respect to a 1-dof graph $G$, and let $T$ be its tropicalization. Then the genus of $\cC$ is equal to the tropical genus of $T$.  \end{corollary}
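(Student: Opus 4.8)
The plan is to read the corollary as a direct combination of the two results just proved: Lemma~\ref{lem:tropicalcurveissmooth} supplies the smoothness of the tropicalization, and Theorem~\ref{thm:smooth} converts that smoothness into an equality of genera. The only genuine work is to bridge the gap between the hypotheses of Theorem~\ref{thm:smooth}, which concern a single smooth \emph{projective} curve, and the situation at hand, where the generic fiber is affine and typically reducible. I would therefore organize the argument as two reductions: first from the reducible fiber to a single irreducible component, and then from the affine component to its smooth projective model.

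First I would address reducibility. Writing the full generic fiber as a disjoint union of its irreducible components $\cC = \bigcup_i \cC_i$ (disjoint because, by Sard's theorem, $\cC$ is smooth, so its irreducible components coincide with its connected components), tropicalization distributes over the union, giving $\text{Trop}(\cC) = \bigcup_i \text{Trop}(\cC_i)$, with each $\text{Trop}(\cC_i)$ connected since $\cC_i$ is irreducible. By Lemma~\ref{lem:tropicalcurveissmooth} the whole union is a smooth tropical curve, hence near every point it is isomorphic to an open piece of a tropical line. I would then exploit the fact that the rays of a tropical line are in general position --- no proper nonempty subset of them is balanced --- to conclude that a weight-one balanced subcurve passing through such a point must fill the entire local model. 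Consequently the pieces $\text{Trop}(\cC_i)$ are pairwise disjoint, and each is locally isomorphic to a tropical line, i.e. each is itself a smooth tropical curve.

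Next I would pass to the projective setting. By definition the genus of the component $\cC_i$ is the genus $g$ of its unique smooth projective model $\overline{\cC_i}$, and I would apply Theorem~\ref{thm:smooth} to $\overline{\cC_i}$ with $T = \text{Trop}(\cC_i)$. Since $T$ is smooth, the theorem returns that the tropical genus of $T$ equals $g$, which is precisely the genus of $\cC_i$; this finishes the corollary (and is consistent with the fact that all components share the same genus by \cite{lubbes2024irreducible}).

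The main obstacle lies in this last reduction, since tropicalization is sensitive to the chosen torus embedding and $\cC_i$ is an open curve: completing $\cC_i$ to $\overline{\cC_i}$ fills in the punctures, which appear in $T$ as unbounded rays rather than as bounded cycles. I would need to verify that this does not disturb the genus count, arguing through the fully faithful tropicalization of \cite{jell2020constructing} that the bounded part of $T$ reproduces the Berkovich skeleton of $\overline{\cC_i}$, whose first Betti number is $g$, while the added rays are contractible and contribute nothing to the tropical genus. Once this identification of the cyclic part of $T$ with the skeleton is secured, the equality of genera is exactly the content of Theorem~\ref{thm:smooth} together with Euler's formula.
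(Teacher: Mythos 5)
Your proposal is correct and follows essentially the same route as the paper, which derives the corollary in one line by combining Lemma~\ref{lem:tropicalcurveissmooth} (smoothness of the tropicalization) with Theorem~\ref{thm:smooth}. The additional care you take with the decomposition into components and with the passage to the smooth projective model fills in details the paper handles only by an explicit abuse of notation, and your resolution of the affine-versus-projective issue is exactly what is already built into the paper's proof of Theorem~\ref{thm:smooth} via fully faithful tropicalization and Berkovich skeleta.
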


Now that we have proven the theoretical foundation for our result, we give an algorithm which explicitly calculates $g(\text{Trop}(\cC))$.

\section{Algorithm}

In this section we describe an algorithm for computing $g(\tr(\cC))$. As mentioned previously, the algorithm will calculate the number of bounded edges and vertices of the tropical curve $\tr(\cC)$ by traversing the curve iteratively. The input of the algorithm are the edges of the graph $G$, and the output is the value $|E|-|V|+1$, where $E$ and $V$ refer to the bounded edges and vertices of $\tr(\cC)$. 

The first step of the algorithm is to find a good `starting point', meaning a point on $\tr(\cC)$, such that this particular curve exhibits the properties we need - namely, it is a transverse intersection of the corresponding $X$ and $Y$ from the preceding section. In particular, this entails finding a point $p \in \mathbb R^{|E|-1}$ such that for a `good' choice of weight vector $w$, the point $p$ lies on the tropical curve $\tr(\cC)$; specifically, this means that $p:=(x_1,...,x_d)$ (here indices correspond to edges of $G$) satisfies the cycle conditions of the form $\min_{e \in C}\{ x_e\}$ obtained twice, and that for some weight vector $w = (w_1,...,w_e)$ the cycle conditions $\min_{e \in C}\{ w_e - x_e\}$ obtained twice are also satisfied, and that furthermore this weight vector $w$ gives a transverse intersection of the complexes $X$ and $Y$ as defined in Section \ref{sec:genus}.

After the starting point is found, the main algorithm begins. The basic structure is shown in Algorithm \ref{alg1}. Within Algorithm \ref{alg1}, there are two sub-routines, Directions$(v)$ and Travel$(v,d)$. The travel subroutine is simple, and moves in direction $d$ from $v$ until it reaches a vertex. The more complex subroutine is Directions$(v)$, which calculates the directions of the edges leaving $v$, together with the distance required to travel in each direction to find the next vertex. This is the content of Lemma \ref{lem:dir}, and the method used by the algorithm can be seen in Algorithm \ref{algdirection}.
\begin{algorithm}
\caption{Traversing a Tropical Curve} \label{alg1} 
\begin{algorithmic}
\State Unexplored = \{Starting Point\} 
\State Vertices = \{Starting Point\}
\State Edges = \{ \}
\While{$\text{Unexplored} \neq \emptyset$}
    \For{$v \in \text{Unexplored}$}
        \State $D = \text{Directions}(v)$
        \For{$d \in D$}
            \If{$(v,d)$ gives infinite ray}
                \State Continue
            \EndIf
            \State $v' = \text{Travel}(v,d)$
            \If{$v' \notin \text{Vertices}$}
                \State $\text{Vertices} = \text{Vertices} \cup \{v'\}$
                \State $\text{Edges} = \text{Edges} \cup \{v,v'\}$
                \State $\text{Unexplored} = \text{Unexplored} \cup \{v'\}$
            \EndIf
        \EndFor
        \State $\text{Unexplored} = \text{Unexplored} \setminus \{v\}$
    \EndFor
\EndWhile
\State \Return{$|\text{Edges}| - |\text{Vertices}| +1$}
\end{algorithmic}    
\end{algorithm}
\subsection{Starting Point / Weight Generation}

Recall that Lemma~\ref{thm:transversal} guarantees that if we take a generic weight vector, then any point in the tropical fiber of $w$ determined by the tropical constraints given before Lemma~\ref{lem:bergman} is a good starting point. But solving these contraints may be costly. In this subsection we explain how to avoid that costly step.

\begin{lemma} \label{lem:start}
Let $u\in{\mathbb R}^n$ be a generic weight vector, and $X$ be the tropical variety as defined in Lemma \ref{lem:tropicalcurveissmooth}. Let $p\in{\mathbb R}^n$ be the unique intersection point of
$X$, the translate $u+X$, and the hyperplane $H$ defined by $x_n-1=0$. If $w:=2p-u$, then $w$ is a generic weight vector, and $p$ is a point in the intersection
of $X$ and $Y:=w-X$.
\end{lemma}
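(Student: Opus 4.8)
The plan is to exploit that the auxiliary object $X\cap(u+X)$, in contrast to the configuration curve $X\cap Y$, is the tropicalisation of an intersection of two \emph{linear} spaces and is therefore a single line; this makes $p$ trivial to locate and reduces the whole statement to linear algebra. Throughout I write $n=|E|-1$ and use $\dim X=(n+1)/2=|V|-2$, so that $2\dim X=n+1$.

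I would first dispatch the only genuinely algebraic assertion, that $p\in X\cap Y$. By construction $p\in u+X$, so $p-u\in X$; hence with $w:=2p-u$ we have $w-p=p-u\in X$, i.e. $p\in w-X=Y$. Combined with $p\in X$ (also part of the definition of $p$), this gives $p\in X\cap Y$ with no appeal to genericity. This is exactly the point of the choice $w=2p-u$: it makes $p$ the midpoint of $u$ and $w$.

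Next I would prove existence and uniqueness of $p$. Writing $X=\tr(L_1)$ with $L_1$ linear (Lemma~\ref{lem:bergman}), the translate $u+X$ is the tropicalisation of a coordinate rescaling $L_1'$ of $L_1$. A dimension count gives $\dim L_1+\dim L_1'=n+1>n$, so $L_1\cap L_1'\neq 0$; arguing exactly as in Lemma~\ref{thm:transversal} with the translation vector $u$ in place of $w$, genericity of $u$ makes $X$ and $u+X$ meet transversally, so Proposition~\ref{prop:transverseintersection} yields $X\cap(u+X)=\tr(L_1\cap L_1')$ with $\dim(L_1\cap L_1')=1$. Thus $L_1\cap L_1'$ is a single line through the origin which, for generic $u$, avoids every coordinate hyperplane, so its tropicalisation is one affine line in the all-ones direction $\mathbf 1$. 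Since $\mathbf 1$ has nonzero last coordinate, this line meets $H=\{x_n=1\}$ in exactly one point, which is $p$; uniqueness is immediate. Equivalently, $\mathbf 1$ lies in the lineality of every cone of the Bergman fan $X$, so if $V$ and $V'$ denote the $\mathbb Q$-linear spans of the cones of $X$ through $p$ and through $p-u$, then $V\cap V'=\langle\mathbf 1\rangle$.

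The main obstacle is showing that $w=2p-u$ is generic. On the maximal cones just described, $p$ is cut out by the $\mathbb Q$-linear system consisting of $f_i(p)=0$ (functionals defining $V$), $g_j(p)=g_j(u)$ (coming from $p-u\in V'$), and $x_n=1$. This is a square $n\times n$ system $Ap=b(u)$, whose matrix $A$ has rows $f_i,g_j,e_n^{\top}$; it is invertible precisely because the common kernel $V\cap V'\cap\{x_n=0\}=\langle\mathbf 1\rangle\cap\{x_n=0\}$ is trivial, which is the transversality verified above. Hence $p=A^{-1}b(u)$ is $\mathbb Q$-affine in $u$ with linear part $M=A^{-1}B$, where $B$ carries the rows $g_j$ in the corresponding slots and zeros elsewhere. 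The key observation is that $w=2p-u$ has linear part $N=2M-I=A^{-1}(2B-A)$, and the rows of $2B-A$ are $-f_i,\;g_j,\;-e_n^{\top}$, i.e. the rows of $A$ up to sign; so $2B-A$ is invertible along with $A$, and therefore $N$ is invertible. Thus $u\mapsto w$ is an invertible $\mathbb Q$-affine map, so $\mathbb Q(w)=\mathbb Q(u)$ and the algebraically independent coordinates of $u$ pass to algebraically, and in particular $\mathbb Q$-linearly, independent coordinates of $w$; hence $w$ is generic, as required.
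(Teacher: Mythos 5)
Your first step ($p\in Y$ because $w-p=2p-u-p=p-u\in X$) is exactly the paper's, and your route to the genericity of $w$ is genuinely different from the paper's: you exhibit $u\mapsto w$ as an invertible $\mathbb Q$-affine map on the relevant cones via the observation that the rows of $2B-A$ are the rows of $A$ up to sign, whereas the paper argues field-theoretically (both $X$ and $Y=w-X$ are defined over the field $E$ generated by the coordinates of $w$, hence so are $p$ and $u=2p-w$, and genericity of $u$ then forces $E$ to be as large as possible). Your approach is more explicit and, suitably repaired, would work.

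However, there is a genuine error in your description of $X\cap(u+X)$. The space $L_1$ is \emph{affine}, not linear: the normalisation $s_0=1$ makes every cycle equation through the edge $\{1,2\}$ inhomogeneous, which is why the tropical cycle conditions carry the constant $0$ inside the minima (e.g.\ $\min\{0,u_1,u_2,u_3\}$ for $C_4$). Consequently $\mathbf 1$ is \emph{not} in the lineality space of $X$ — for $C_4$ one has $(0,1,2)\in X$ but $(0,1,2)+\mathbf 1=(1,2,3)\notin X$ — and $L_1\cap L_1'$ is an affine line missing the origin, whose tropicalization is a genuine tropical line (a tree with $n+1$ unbounded ends), not a single Euclidean line in direction $\mathbf 1$. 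For $C_4$, $X$ is the standard tropical plane in $\mathbb R^3$ and $X\cap(u+X)$ is a tropical line with four rays and a bounded edge. This undermines both your existence/uniqueness argument for $p$ and, more seriously, your proof that $A$ is invertible, which rests on $V\cap V'=\langle\mathbf 1\rangle$. Transversality still gives $\dim(V\cap V')=1$, but its spanning direction $d$ is some zero-one vector that may a priori have $d_n=0$, in which case $A$ is singular. The fix is to invoke the uniqueness of $p$ directly: if $d_n=0$, the edge of $X\cap(u+X)$ through $p$ would be parallel to $H$, hence (as $p\in H$) contained in $H$, contradicting uniqueness; therefore $V\cap V'\not\subset\{x_n=0\}$ and $A$ is invertible. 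With that substitution, the remainder of your linear-algebra argument for the genericity of $w$ goes through.
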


\begin{proof}
Since $p\in X\cap (u+X)$ and $Y=2p-(u+X)$, it follows that $p\in X\cap Y$, and the last claim is proved.
Note that $X$ and $u+X$ intersect transversally at $p$, because $u$ is generic. Because $\dim(X)+\dim(u+X)+\dim(H)=n$, the point $p$
lies in a maximal faces of $X$, $u+X$, and $H$ (the hyperplane $H$ has only a single face anyway).
So, there exists a neighborhood $U$ of $p\in{\mathbb R}^n$ such that $U\cap (u+X)$ is affine linear. Without loss of generality,
the neighborhood of $U$ may be assumed to be symmetric around $p$, i.e., $U=2p-U$. Then the tropical isomorphism
$t:z\to 2p-z$ maps $u+X$ to $Y$. This implies that $X$ and $Y$ intersect transversally at $p$.

Let $E\supseteq {\mathbb Q}$ be the field generated by the coordinates of $w$. Then both $X$ and $Y$ are defined over $E$, and therefore
the coordinates of $p$ all lie in $E$. Then also the coordinates of $u=2p-w$ also lie in $E$. But $u$ was chosen generically,
hence $\dim_{\mathbb Q}(E)=n$. This shows the remaining claim that $w$ is generic.
\end{proof}

Our algorithm is probabilistic, with a single random step: the choice of a random vector $u$ that plays the role of the generic translation
from $X$ to $u+X$. The conclusion in Lemma~\ref{lem:start} is fulfilled with $U$ in some open subset $W\subset{\mathbb R}^n$ that contains
all generic vectors. Its complement $F$ is again a tropical variety of dimension less than $n$ (see~\cite[Example~5.5.5]{sturmfelsmaclagan}).
Our probabilistic algorithm relies on the fact that a rational random vector will likely not lie in $F$.

Typically, the tropicalization of an algebraic system of equations is simpler than the variety itself. Linear systems are an exception to this rule of thumb. The tropical system for the starting point $p$ is the tropicalization of the following linear system defined over ${\mathbb C}\{t\}$:
\[ \sum_{e\in C} s_i= \sum_{e\in C} t_i = 0 \mbox{ for all cycles $C$,}
 \ \ s_it_i = t^{u_i}, \ s_0=t_0=1, \ s_n=t^1. \]
In order to compute $p$, we solve this linear system and set $p$ as the value vector of the unique solution. Since this requires Puiseux series arithmetic, this algorithm is implemented in Maple instead of Python.

\subsection{Direction Calculation}\label{sec:dir_calc}

A main step of our algorithm is to calculate the directions in which a tropical curve can continue from a vertex; in order to do this efficiently, we prove the following lemma restricting our search space to only the vectors with entries from $\{0,1\}$.

\begin{lemma}\label{lem:dir}
  Let $G$ be a 1-dof graph, and let $B$ be an edge in the tropical curve $\text{Trop}(\cC)$, where the weights $w_e$ are integers. Then the direction vector of $B$ is, up to scalar multiplication, a two-valued vector with entries in $\{0,1\}$.
\end{lemma}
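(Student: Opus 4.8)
The plan is to work locally at a point $p$ in the relative interior of the edge $B$ and to show that the direction of $B$ must be the indicator vector of a set of edges. First I would record that, by Corollary~\ref{cor:trans}, $\text{Trop}(\cC)=X\cap Y$, and that by the transversality Lemma~\ref{thm:transversal} the only way to obtain a one-dimensional piece of the intersection is for $p$ to lie in the relative interior of a maximal face $\sigma$ of $X$ and of a maximal face $\tau$ of $Y$ simultaneously (this is ``Case~1'' in the proof of Lemma~\ref{lem:tropicalcurveissmooth}; a codimension-one face of $X$ meeting a maximal face of $Y$ transversally has $\dim\sigma+\dim\tau=n$ and hence produces a $0$-dimensional intersection, i.e. a vertex). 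Consequently, in a neighbourhood of $p$ the sets $X$ and $Y$ coincide with the affine spans $L_\sigma$ and $L_\tau$, and the direction vector $\delta$ of $B$ spans the line $L_\sigma\cap L_\tau$ inside the normalised ambient space $\{u_0=0\}$.

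Next I would characterise $\delta$ combinatorially. For each cycle $C$ of $G$, membership in $X$ means $\min_{e\in C}u_e$ is attained twice; let $A_C\subseteq C$ be the set of edges attaining this minimum at $p$, where we include the normed edge $0$ with its value $u_0=0$ whenever $0\in C$. Since $p$ lies in the relative interior of $B\subseteq\sigma$, both $p+t\delta$ and $p-t\delta$ remain in the relative interior of $\sigma$ for small $t>0$, so the min-achiever set $A_C$ is unchanged under both perturbations. Moving by $+t\delta$ selects the edges of $A_C$ minimising $\delta_e$, while moving by $-t\delta$ selects those maximising $\delta_e$; since both selections must again be all of $A_C$, the value $\delta_e$ is forced to be constant on $A_C$. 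The same argument applied to $Y$, using $\min_{e\in C}(w_e-u_e)$ and the fact that the velocity of $w_e-u_e$ is $-\delta_e$, shows that $\delta$ is also constant on each min-achiever set $B_C$ of $Y$.

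To conclude, I would let $\mathcal R$ be the partition of the edge set generated by the relation ``$e\sim e'$ whenever $e,e'$ lie in a common $A_C$ or a common $B_C$''. By the previous paragraph $\delta$ is constant on every block of $\mathcal R$, and since $\delta_0=0$ it vanishes on the block $D_0$ containing the normed edge $0$. Conversely, since each $A_C$ and each $B_C$ lies inside a single block, any vector that is constant on the blocks of $\mathcal R$ and zero on $D_0$ keeps every cycle minimum attained by the full set $A_C$ (resp. $B_C$) under small motion, hence stays in $X\cap Y$; thus the admissible direction space is exactly $\{\delta:\ \delta\ \text{constant on each block of }\mathcal R,\ \delta|_{D_0}=0\}$, of dimension (number of blocks)$-1$. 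But near $p$ we have $X\cap Y=B$, so this space is one-dimensional, forcing $\mathcal R$ to have precisely two blocks $D_0\ni 0$ and $D_1$. Therefore $\delta$ is a scalar multiple of the indicator vector $\mathbf 1_{D_1}$, a two-valued vector with entries in $\{0,1\}$, as claimed.

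The step I expect to be the main obstacle is the first one: justifying rigorously that a relative-interior point of an edge of $\text{Trop}(\cC)$ always falls into Case~1, so that $X$ and $Y$ are genuinely flat near $p$ and the local intersection is a single line. This rests on the transversality and dimension bookkeeping already set up for Lemma~\ref{lem:tropicalcurveissmooth}, but one must fold the normalisation $u_0=0$ correctly into the ambient dimension count and rule out that an edge of the curve is supported on a lower-dimensional face of $X$ or $Y$. The remaining verifications — constancy of the combinatorial type along the open edge and the partition/dimension count — are then routine; note in particular that integrality of the $w_e$ is not needed for this conclusion, since $\delta$ emerges as an honest indicator vector.
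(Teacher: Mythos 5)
Your argument is correct, but it takes a genuinely different route from the paper's. The paper's proof is a short, self-contained floor/ceiling trick: perturb a point $x$ in the interior of $B$ so that every non-constant coordinate is non-integral and distinct from the constant ones, observe that $\lfloor x\rfloor$ and $\lceil x\rceil$ still satisfy every cycle condition (this is exactly where integrality of the $w_e$ enters, via $\lfloor w_e-x_e\rfloor=w_e-\lfloor x_e\rfloor$), and conclude that the segment through $x$ in the $\{0,1\}$-direction $\lceil x\rceil-\lfloor x\rfloor$ lies on the curve and hence is parallel to $B$. You instead identify the admissible direction space at $p$ with the vectors constant on the blocks of the partition generated by the min-achiever sets $A_C$, $B_C$ and vanishing on the block of the normed edge, and let one-dimensionality of the curve force exactly two blocks; this is in effect a correctness proof of the Directions subroutine (Algorithm~\ref{algdirection}), which the paper only motivates informally, and it shows that integrality of $w$ is not what makes the statement true. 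The trade-off is in the hypotheses: your first step (that a relative-interior point of an edge lies in relative interiors of maximal faces of both $X$ and $Y$, so both are locally affine) and your use of Corollary~\ref{cor:trans} rest on Lemma~\ref{thm:transversal} and on the assumption that no codimension-one faces of $X$ and $Y$ meet, which the paper establishes only for tropically generic $w$ (coordinates $\mathbb{Q}$-linearly independent) --- a property no integer weight vector has. In the algorithm's context this is harmless, since transversality is verified separately, but as a standalone proof of the lemma as stated you should add transversality as an explicit hypothesis; the paper's argument needs no such assumption and applies to the polyhedral set cut out by the tropical conditions for an arbitrary integer $w$. One further minor point: your assertion that the achiever sets are unchanged along $p\pm t\delta$ implicitly uses the fine polyhedral structure on $X$ in which faces are determined by the achiever sets; this is the natural structure and easy to justify, but it should be made explicit.
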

\begin{proof}
    Take any edge $B$ of $\text{Trop}(\cC)\subseteq \mathbb R^d$, and take a point $x = (x_1,...,x_d)$ in the interior of this edge (note that the indices here also correspond to some edge $e \neq \{1,2\}$. On this edge, some coordinate entries may be constant; we assume that for those entries which are \textit{not} constant, that they take a non-integer value, and are not equal to any value of the constant entries - this can be achieved by moving a small amount on the edge. We then define
    $$\lceil x \rceil := \left(\lceil x_1 \rceil,...,\lceil x_d \rceil \right), \quad \lfloor x \rfloor := \left(\lfloor x_1 \rfloor,...,\lfloor x_d \rfloor \right).$$
 Both of these points lie on $\text{Trop}(\cC)$, since whichever value the minimum in any cycle condition $\min_{e \in C}\{ x_e \}$ takes, after taking floor functions the two (or more) edges where the minimum is attained, still attain the minimum. The same is true of the other cycle condition $\min_{e \in C} \{w_e - \lfloor x_e \rfloor \}$, with the same argument applied to $\lfloor w_e - x_e \rfloor = w_e - \lfloor x_e \rfloor$, which holds since $w_e$ is an integer. 

 We now claim that for sufficiently small $\lambda_0$, the line segment $$x + \lambda (\lceil x \rceil - \lfloor x \rfloor), \ \lambda \in (-\lambda_0,\lambda_0)$$
 is contained in $\text{Trop}(\cC)$. Indeed, we firstly note that at $x$, each cycle condition minimum is either being attained at an integer value, in which case since this line segment does not change the value of integer coordinates, the minimum is still being attained for $\lambda$ small enough. Secondly, if the minimum in some cycle condition at $x$ is being attained at a non-integer value, then all entries of $x$ with that non-integer value are being changed by value $\lambda (1,1,...,1)$, and thus the minimum is preserved for small enough $\lambda$ (we must ensure that none of these non-integer values are changed enough to become integers, since at these points they may become equal to the non-changing integer valued coordinates). Since the direction vector $\lceil x \rceil - \lfloor x \rfloor$ has entries in $\{0,1\}$, we are done.
 \end{proof}

 \begin{remark}
     We remark that when we explicitly calculate the direction vector of an edge, it may of course need to be scaled by some non-zero constant to have all non-zero entries equal to one. Therefore when we find the directions from a vertex, we must attempt to travel in both directions; we have lost the orientation of the direction by the rescaling implicitly involved.
 \end{remark}

 We can further restrict our search space by checking, for a vertex $v$, in which positions each cycle minimum is obtained twice. This is useful since it allows us to `connect together' coordinates, since some possible directions will break the minimum obtained twice condition. More precisely, suppose that at a vertex $v = (v_1,...,v_d)$, some cycle condition has its minimum being attained exactly twice, say at the coordinates $(v_s,v_t)$. Then for any edge coming out of $v$, if the coordinate for $v_s$ is changing along that edge, the coordinate for $v_t$ must be changing by the same amount - otherwise one would overtake the other, and the minimum would not be attained twice. We use the simple example of $C_4$ to illustrate this.

 For the graph $C_4$, with weights $w_1 = 1, w_2=2, w_3 = 3$, the two cycle conditions are
 \begin{gather*}
      \min\{x,y,z,0\} \text{ attained twice}\\
 \min\{1-x,2-y,3-z,0\} \text{ attained twice}
 \end{gather*}
 Suppose that at some stage of the algorithm we are at the vertex $(0,2,3)$ of the tropical curve in $\mathbb R^3$. From the first cycle condition, we see that the value of $x$ cannot change; this would break the minimum condition. From the second cycle condition, we see that if either of $y$ or $z$ were to decrease, then the other one must remain fixed. Finally, we see that if either of $y$ or $z$ were to increase, then so must the other, by the same amount. This means that only three directions are possible; they are $(0,1,1)$, $(0,-1,0)$, and $(0,0,-1)$. These are precisely the three direction vectors of the edges coming out of this vertex. Below is an outline of the algorithm, written in pseudocode.

 \begin{algorithm}
     \caption{Calculation of possible directions from a vertex $v$}\label{algdirection}
     \begin{algorithmic}
         \State Input: Vertex $v$ and the set \textbf{Equations} of cycle equations \textbf{Equations}
         \State \textbf{Pairs} = \{ \}
         \For{$eq \in$ \textbf{Equations}}
            \If{$v$ attains $eq$ min exactly twice, at entries $v_i,v_i$}
                \State $\textbf{Pairs} = \textbf{Pairs} \cup \{\{v_i,v_j\}\}$
            \EndIf
         \EndFor
         \State \textbf{Blocks} $=$ Transitive closure of \textbf{Pairs}
         \State \textbf{Directions} = {0,1 vectors with constant entries along each block in \textbf{Blocks}}
     \end{algorithmic}
 \end{algorithm}
 
\section{The Genus of a Trace}\label{sec:4 couplercurves}

In \cite{lubbes2024open}, the authors ask for the genus of the trace of a vertex of a 1-dof graph when an edge of the graph is
fixed in position. The trace of a vertex is simply the projection of the configuration set to the coordinates of that vertex. The trace might be a finite set of points; this happens in the case when the fixed edge and the tracing point are contained
in a rigid subgraph. 
If the trace has dimension one, then its irreducible components are curves that all have the same genus. Any irreducible
component of the configuration set projects to an irreducible component of the trace. Since the genus never increases
along an algebraic map between two algebraic curves, it follows that the genus of an irreducible component of the configuration set
is an upper bound. Also, the upper bound is an equality if the projection map, restricted to one of the components,
is birational to the image.

Throughout this section, we assume that $G$ is a 1-dof graph, $i,j,k$ are three distinct vertices, the fixed edge is $\{i,j\}$,
and the tracing vertex is $k$. The graph $G'$ obtained by adding edges $\{i,k\}$ and $\{j,k\}$ is called the {\em extended graph} $G'$.
Since the edge lengths of $\{i,k\}$ and $\{j,k\}$ may be considered as local coordinates of the tracing vertex, the tracing set
is finite if and only if both edge lengths have only finite values in the configuration set. This is the case if and only if
the extended graph $G'$ is not rigid. From now on, we assume that the tracing set has dimension one, or equivalently that
the extended graph $G'$ is rigid.

Following \cite{jacksonjordan}, we say that a graph is {\em globally rigid} if, for a generic realisation, all other realisations
with the same edge lengths can be obtained by rotation or reflection.

\begin{proposition} \label{prop:ggr}
Assume that the extended graph $G'$ is globally rigid. Then
the projection map from the configuration set to the tracing set is generically injective.
In particular, the restriction to any irreducible component is birational to its image.
\end{proposition}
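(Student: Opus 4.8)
The plan is to analyse the fiber of the projection $\pi$ from the configuration set to the trace over a generic position of the tracing vertex $k$, and to show that this fiber consists of a single configuration. The first issue to settle is a genericity one: global rigidity is only a property of \emph{generic} realisations of $G'$, whereas a configuration of $G$ is constrained to have the prescribed (generic) edge lengths, so it is far from being a generic point of the full realisation space $\mathbb C^{2|V|}$. To handle this I would work in the realisation space of $G'$, which coincides with that of $G$ since $G'$ only adjoins edges, and observe that the locus $B\subseteq\mathbb C^{2|V|}$ where global rigidity of $G'$ fails is a proper Zariski-closed subset defined over $\mathbb Q$. Since every point of $\mathbb C^{2|V|}$ is a realisation of $G$ for \emph{some} edge-length vector, the configuration sets of $G$ sweep out all of $\mathbb C^{2|V|}$ as the parameters vary; hence for generic edge lengths the configuration set is not contained in $B$, and its generic point is a globally rigid realisation of $G'$.

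With this in hand I would describe the fiber directly. Fix a generic position $q$ of the tracing vertex $k$, and let $\rho$ be any configuration in $\pi^{-1}(q)$. Adjoining the edges $\{i,k\}$ and $\{j,k\}$ turns $\rho$ into a realisation of $G'$, whose edge lengths are the fixed lengths of $G$ together with the two lengths $|p_i-q|$ and $|p_j-q|$ determined solely by $q$ (here $p_i,p_j$ are the fixed positions of the endpoints of $\{i,j\}$). Thus every configuration in $\pi^{-1}(q)$ is a realisation of $G'$ with one and the same edge-length vector, and by the genericity step these are all globally rigid realisations. Consequently any two of them differ by an element of the complex isometry group $O(2,\mathbb C)\ltimes\mathbb C^2$.

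The key geometric step is then that fixing the edge $\{i,j\}$ pins the two distinct points $p_i,p_j$, and the stabiliser of two distinct points in $O(2,\mathbb C)\ltimes\mathbb C^2$ consists only of the identity and the reflection $R$ across the line through $p_i$ and $p_j$. This reflection sends the tracing vertex from $q$ to $R(q)$, and for generic $q$ (which lies off that line, since for generic parameters no component of the trace is contained in it) we have $R(q)\neq q$, so the reflected configuration is not an element of $\pi^{-1}(q)$. Hence the only isometry identifying two configurations in the fiber is the identity, and $\pi^{-1}(q)$ is a single point; this establishes generic injectivity of $\pi$. For the final assertion I would restrict to an irreducible component $\cC_0$ mapping dominantly onto a component $D_0$ of the trace: generic injectivity forces $\pi^{-1}(q)\cap\cC_0$ to be a single point for generic $q\in D_0$, and in characteristic zero a dominant generically one-to-one morphism of irreducible curves has degree one on function fields, hence is birational onto its image.

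The main obstacle I expect is the genericity bookkeeping of the first paragraph, namely making rigorous that a generic configuration of $G$, although constrained, is a globally rigid realisation of $G'$; the sweeping argument over all edge-length vectors is the natural way to close this gap. A secondary but essential point is the correct interpretation of ``rotation or reflection'' over $\mathbb C$ as the stabiliser computation in $O(2,\mathbb C)\ltimes\mathbb C^2$, which is what guarantees that fixing $\{i,j\}$ leaves only the identity and a single reflection.
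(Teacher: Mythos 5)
Your argument is essentially the paper's: both proofs observe that over a generic point of the trace every configuration in the fiber is a realisation of $G'$ with one and the same (generic) edge-length vector, invoke global rigidity to conclude that the fiber is a single orbit under the stabiliser of the pinned edge $\{i,j\}$ (identity and one reflection), and discard the reflection because a generic trace point is not fixed by it. The only difference is one of detail, not of route: you spell out the genericity transfer and the stabiliser computation in $O(2,\mathbb C)\ltimes\mathbb C^2$, which the paper's proof states tersely as ``exactly two realisations, one the reflection of the other.''
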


\begin{proof}
Let $K$ be the configuration set for fixed generic edge lengths of the graph $G$ and fixed vertices $i,j$. Let $p_k$ be a generic point
of the tracing set. Together with the lengths of $\{i,k\}$ and $\{j,k\}$, the fixed edge lengths form an edge length vector of
$G'$ that is generic in the image of the edge map. If $G'$ is globally rigid, there are exactly two realisations of $G'$ with
this edge length vector and fixed vertices $i,j$. Moreover, one realisation is a reflection of the other. So, only one of them
maps the vertex $k$ to the position $p_k$. This is then the only point in the preimage of $p_k$ under the projection map.
\end{proof}

Figure~\ref{fig:coupler} shows the trace of a vertex of a graph with 6 vertices such that the extended graph is globally rigid. The genus of the configuration sets and the traces can be computed using Algorithm~\ref{alg1}.
\begin{figure}[H]
\begin{centering}
\includegraphics[height=4cm]{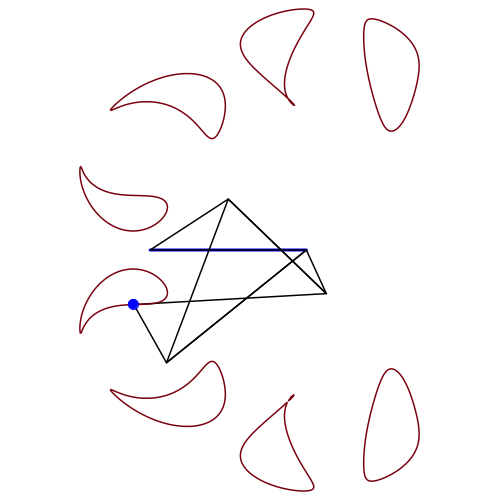} \hspace{1cm}
\includegraphics[height=4cm]{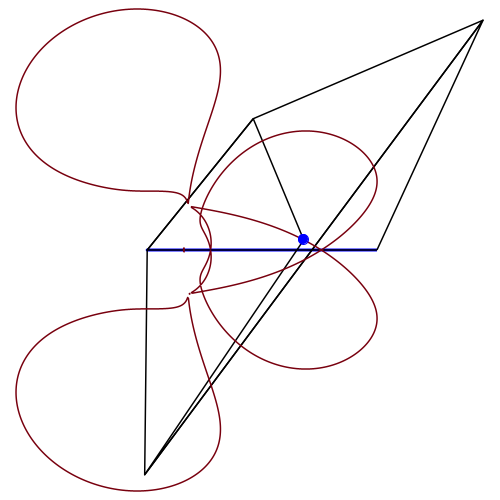} \hspace{1cm}
\includegraphics[height=4cm]{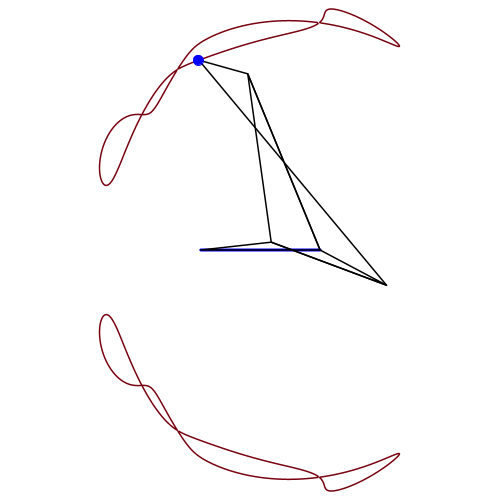}
\caption{Three traces of the same graph with six vertices. The genus of the configuration curves and the traces is equal to 17.}
\label{fig:coupler}
\end{centering}
\end{figure}
If the extended graph $G'$ is not globally rigid, then it is sometimes possible to replace the graph $G$ by a graph
with fewer vertices that produces the same tracing set for $k$. For instance, if $G$ is a 4-cycle, then the tracing set is a circle,
and this circle can also be obtained by a subgraph with three vertices. We call such a process a simplification.

\begin{definition}[Simplifications]\label{def:simp} We define two types of simplifications.\begin{enumerate}
\item Assume that $H$ is a (minimally) rigid subgraph of $G$ with two vertices $u,v\in V(H)$ such that any edge of $G$ connecting a vertex in
$V(G)\setminus V(H)$ with a vertex in $V(H)$, then the vertex in $V(H)$ is either $u$ or $v$; moreover, we assume that
that $V(H)\setminus\{u,v\}$ does not contain $i$, $j$, or $k$. Then the graph $G_1$ (which will be a 1-dof graph) obtained from $G$ by replacing the subgraph $H$
by a single edge $\{u,v\}$ is called a {\em simplification of the first kind}.
\item Assume that $G_2$ is a proper 1-dof subgraph of $G$ such that $i,j,k\in V(G_1)$. Then $G_2$ is called a
{\em simplification of the second kind}.
\end{enumerate}

\end{definition}

If $G_1$ is a simplification of the first kind, then the tracing set of $G$ is the union of the tracing sets of $G_1$ for different
edge lenths of the new edge $\{u,v\}$.
If $G_2$ is a simplification of the second kind, then the tracing set of $G$ is a subset of the tracing set of $G_1$.
In both cases, any component of the tracing set of $G$ is a component of a tracing set of the simplification.

It is known that global rigidity is preserved under \emph{1-extensions} and \emph{edge additions} \cite{jacksonjordan}.

A graph is called {\em 3-connected} iff it has at least four vertices, and any subgraph obtained by removing two vertices
is connected. A graph is called {\em redundantly rigid} if any subgraph obtained by removing an edge is rigid.
By \cite[Theorem~7.1]{jacksonjordan}, a graph with at least four vertices is globally rigid if and only if
it is 3-connected and redundantly rigid.
We also recall the theorem of Geiringer/Laman (\cite{Geiringer,Laman}): a graph $H$ such that $|E(H)|=2|V(H)|-3$ is minimally
rigid if and only if $|E(H')|\le 2|V(H')|-3$ for every subgraph $H'$ of $H$ with at least two vertices.

\begin{lemma}\label{lem:hyp}
Let $H$ by a rigid graph such that $|E(H)|=2|V(H)|-2$ that is not redundantly rigid. Then $H$ has a proper rigid subgraph $H_1$
such that $|E(H_1)|=2|V(H_1)|-2$.
\end{lemma}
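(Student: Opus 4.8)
The plan is to argue inside the generic two-dimensional rigidity matroid on the edge set of $H$, in which, by the Geiringer/Laman theorem, a set of edges $F$ is independent precisely when $|E(F')|\le 2|V(F')|-3$ for every subset $F'\subseteq F$ with $|V(F')|\ge 2$. Since $H$ is rigid, this matroid has rank $2|V(H)|-3$, and since $|E(H)|=2|V(H)|-2$ its nullity is exactly $1$. The heart of the argument is to identify the desired subgraph $H_1$ with the unique rigidity circuit contained in $H$.

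First I would show that nullity one forces a \emph{unique} circuit. Choosing a basis $B$ (a maximal independent set, so $|B|=2|V(H)|-3$), there is exactly one edge $e_0\in E(H)\setminus B$. Every circuit meets $E(H)\setminus B=\{e_0\}$, since $B$ is independent and hence contains no circuit; so every circuit contains $e_0$. If two distinct circuits $C_1,C_2$ existed, the circuit elimination axiom would produce a circuit inside $(C_1\cup C_2)\setminus\{e_0\}\subseteq B$, contradicting independence of $B$. Hence there is a unique circuit, call it $D$, and it contains $e_0$.

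Next I would use the failure of redundant rigidity to make $D$ a proper subgraph. Since $H$ is not redundantly rigid, there is an edge $e$ with $H-e$ not rigid; as deleting one edge drops the matroid rank by at most one and $H$ has rank $2|V(H)|-3$, the graph $H-e$ has rank $2|V(H)|-4$, i.e. $e$ is a coloop. A coloop lies in no circuit, so $e\notin D$, and therefore $D\subsetneq H$. (Here one records that a rigid graph on $|V(H)|\ge 3$ vertices has minimum degree at least two, so $V(H-e)=V(H)$ and the rank comparison is genuinely on the same vertex set.)

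The main work, and the step I expect to be the real obstacle, is to verify that $D$ is itself rigid with $|E(D)|=2|V(D)|-2$. For the upper bound on the edge count: deleting any edge $g$ of $D$ leaves $D-g$ independent by minimality of a circuit, and since a degree-one vertex together with its incident edge could be deleted without affecting dependence — contradicting minimality — every vertex of $D$ has degree at least two, so $D-g$ still spans $V(D)$; Laman's count then gives $|E(D)|-1\le 2|V(D)|-3$. Conversely, $D$ is dependent, so Laman's theorem forces some subset to violate the count, and minimality of the circuit forces that subset to be $D$ itself, giving $|E(D)|\ge 2|V(D)|-2$. Combining, $|E(D)|=2|V(D)|-2$, whence $\operatorname{rank}(D)=|E(D)|-1=2|V(D)|-3$, the full rigidity rank on $|V(D)|$ vertices, so $D$ is rigid. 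Setting $H_1:=D$ finishes the proof. The two points that need care are the minimum-degree claim for circuits and the two-sided Laman estimate; everything else is formal matroid theory.
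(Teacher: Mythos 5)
Your proof is correct, but it takes a genuinely different route from the paper's. You work inside the generic two--dimensional rigidity matroid: nullity one gives a \emph{unique} rigidity circuit $D$, the non-redundant edge $e$ is a coloop and hence avoids $D$ (which gives properness), and you then re-derive the standard facts that a rigidity circuit has minimum degree two, satisfies $|E(D)|=2|V(D)|-2$, and is rigid. All the steps check out, including the two points you flag as delicate: the degree-one argument is sound because an edge into an uncovered vertex never participates in a dependence, and the two-sided Laman estimate correctly combines independence of $D-g$ with dependence of $D$ and minimality. The paper instead argues by pure edge counting: it takes a subgraph $H_1$ of $H-e$ violating the Laman count, picks a second edge $e'$ whose removal leaves a minimally rigid graph (your basis complement $\{e_0\}$ in disguise), and plays the counts for $H_1$ and $H_1-e'$ against each other to force $|E(H_1)|=2|V(H_1)|-2$ and rigidity. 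Your version is more conceptual, identifies $H_1$ canonically as \emph{the} circuit of $H$ (in fact the known result that rigidity circuits are rigid with $2|V|-2$ edges could simply be cited from the Jackson--Jord\'an literature), and yields uniqueness as a bonus; the paper's version is more elementary in that it uses only the stated form of the Geiringer/Laman theorem and never invokes matroid language. Either proof suffices for the application in Proposition~\ref{prop:simpl}.
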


\begin{proof}
Let $e\in E(H)$ such that the graph $H_0$ obtained from $H$ by removing $e$ is not rigid. Then $|E(H_0)|=2|V(H_0)|-3$,
and by the theorem of Geiringer/Laman there exists a subgraph $H_1$ of $H_0$ such that $|E(H_1)|>2|V(H_1)|-3$.
Let $e'\in E(H)$ be an edge such that the graph $H_2$ obtained from $H$ by removing $e'$ is minimally rigid -- such an
edge exists because $H$ is rigid but not minimally rigid. Let $H_3$ be the graph obtained from $H_1$ by removing $e'$ in
case $e'\in E(H_1)$, otherwise we set $H_3=H_1$. Then $H_3$ is a subgraph of $H_2$, and $|E(H_3)|\le 2|V(H_3)|-3$
by Geiringer/Laman. Therefore $e'\in E(H_1)$ and $|E(H_1)|=2|V(H_1)|-2$. Also, the graph $H_3$ is minimally rigid
by Geiringer/Laman, and therefore $H_1$ is rigid as well.
\end{proof}

\begin{proposition} \label{prop:simpl}
If the graph $G'$ is not globally rigid, then there exists a simplification of the first or second kind.
\end{proposition}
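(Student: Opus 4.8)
The plan is to exploit the Jackson--Jordan characterisation of global rigidity together with Lemma~\ref{lem:hyp}. First I would record the combinatorial data: since $G$ is a 1-dof graph we have $|E(G)|=2|V(G)|-4$, and since $i,j,k\in V(G)$ already, the extended graph $G'=G\cup\{ik,jk\}$ has the same vertex set and $|E(G')|=2|V(G')|-2$; by the standing assumption $G'$ is rigid, so it is exactly of the type treated by Lemma~\ref{lem:hyp}. Note also that the three edges $ij$ (the fixed edge, which lies in $G$), $ik$, $jk$ form a triangle $T$ in $G'$, and that since $G$ is a subgraph of a minimally rigid graph it is independent in the rigidity matroid, so no subgraph of $G$ has $2|V|-2$ edges; hence every subgraph of $G'$ with this many edges must use at least one of the two new edges. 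Since a simple graph on $n$ vertices has at most $\binom{n}{2}$ edges, $|E(G')|=2|V(G')|-2$ forces $|V(G')|\ge 4$, so the hypothesis that $G'$ is not globally rigid lets me invoke \cite[Theorem~7.1]{jacksonjordan}: $G'$ is either not $3$-connected or not redundantly rigid. I would treat these two cases separately and in each produce a simplification as in Definition~\ref{def:simp}.

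In the case that $G'$ is not redundantly rigid, I would iterate Lemma~\ref{lem:hyp} to obtain a minimal proper rigid subgraph $C\subsetneq G'$ with $|E(C)|=2|V(C)|-2$; minimality forces $C$ to be redundantly rigid, hence a single rigidity circuit of minimum degree at least $3$, and comparing edge counts gives $V(C)\subsetneq V(G)$. Writing $W=V(C)$ and removing from $C$ the new edges it contains, I would split on how many of $ik,jk$ lie in $C$. If both lie in $C$ then $i,j,k\in W$ and $C\setminus\{ik,jk\}$ is a proper $1$-dof subgraph of $G$ containing $i,j,k$ (it becomes minimally rigid after re-adding one new edge, and $k$ is non-isolated because $C$ has minimum degree $\ge 3$), a simplification of the second kind. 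If exactly one new edge, say $ik$, lies in $C$, then $H:=C\setminus\{ik\}\subseteq G$ is minimally rigid on $W$ with $i,k\in W$; when $j\in W$ as well I would delete one further edge of $H$ to get a proper $1$-dof subgraph containing $i,j,k$ (again second kind). The delicate sub-case is $j\notin W$: here I would use the fixed edge $ij$ (with $i\in W$) to attach $j$ to $H$ as a pendant. If $V(G)=W\cup\{j\}$, an edge count shows $ij$ is the only edge of $G$ from $j$ into $W$, so $H$ is attached to the rest of $G$ only through $\{i,k\}$ and gives a simplification of the first kind; otherwise $H$ together with the vertex $j$ and the single edge $ij$ is a proper $1$-dof subgraph containing $i,j,k$, a simplification of the second kind.

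In the case that $G'$ is redundantly rigid but not $3$-connected, I would take a $2$-vertex separation $\{u,v\}$ with sides $A,B$. Since the triangle $T$ is $2$-connected it cannot straddle the cut, so after relabelling all of $i,j,k$ lie in $V(A)$; then the interior $V(B)\setminus\{u,v\}$ avoids $i,j,k$, and as the two new edges have both endpoints in $\{i,j,k\}\subseteq V(A)$ we may take $B\subseteq G$. Standard rigidity of $2$-sums gives that $B+\{u,v\}$ is rigid; and here redundant rigidity of $G'$ upgrades this to minimal rigidity of $B$, since if some edge $e\in E(B)$ were non-redundant in $B+\{u,v\}$ then $G'\setminus e$ would be a $2$-sum of a rigid and a flexible piece, hence flexible, contradicting redundant rigidity. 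Thus $B$ is a minimally rigid subgraph of $G$ meeting the rest of $G$ only in $\{u,v\}$ and with interior avoiding $i,j,k$; contracting it to the edge $\{u,v\}$ is a simplification of the first kind, and the edge count confirms the contracted graph is again $1$-dof.

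The step I expect to be the main obstacle is the case analysis gluing these pieces to the precise requirements of Definition~\ref{def:simp}: verifying minimal (rather than merely generic) rigidity of the extracted pieces, checking that the candidate separating pair really captures every edge leaving the piece, and tracking which of $i,j,k$ survives in the reduced graph. In particular the two subtle ingredients are the redundancy argument that promotes ``$B+\{u,v\}$ rigid'' to ``$B$ minimally rigid'' in the connectivity case, and the use of the fixed edge $ij$ to manufacture a pendant in the sub-case $j\notin V(C)$ of the matroid case; getting both of these, together with properness (namely $V(C)\subsetneq V(G)$) and the non-isolation of $k$ via the minimum-degree-$3$ property of circuits, is where the real work lies.
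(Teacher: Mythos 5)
Your overall route is the same as the paper's: invoke the Jackson--Jord\`an characterisation to split into ``not $3$-connected'' and ``not redundantly rigid'', handle the first case by a $2$-separation whose side away from the triangle $\{i,j,k\}$ is a minimally rigid subgraph to be contracted, and handle the second case via Lemma~\ref{lem:hyp}. However, there is a genuine gap at the very first step: you assume that both $\{i,k\}$ and $\{j,k\}$ are new edges, so that $|E(G')|=2|V(G')|-2$. If one of them already lies in $E(G)$ (which is perfectly possible and is not excluded by any hypothesis), then $G'$ has only $2|V(G')|-3$ edges, is minimally rigid, and hence is automatically not globally rigid for $|V|\ge 4$ --- so this case genuinely falls under the proposition --- yet Lemma~\ref{lem:hyp} does not apply and your entire circuit-extraction argument in the non-redundantly-rigid case collapses. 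The paper disposes of this situation separately at the start of its Case~2: if $\{i,k\}$ or $\{j,k\}$ is already in $G$ (and likewise if $G'$ minus one of the new edges fails to be rigid), the trace is a circle and the induced subgraph on $\{i,j,k\}$ is a simplification of the second kind. You need this preliminary reduction before your edge counts are valid; after it, the paper gets both new edges into the subgraph $G_1$ of Lemma~\ref{lem:hyp} by a one-line Geiringer/Laman count against the minimally rigid graphs $G'-\{i,k\}$ and $G'-\{j,k\}$, which is considerably shorter than your three-way split on how many new edges the circuit $C$ contains.

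Two smaller points. In the connectivity case, your step ``redundant rigidity of $G'$ upgrades this to minimal rigidity of $B$'' is garbled: the argument you sketch shows that every edge of $B$ is \emph{redundant} in $B+\{u,v\}$, which is the opposite of minimality. What you actually need is that $B$ is rigid (which your rank count for $2$-separations does give, and which the paper obtains by noting that at least one side must be rigid and the rigid side cannot contain the triangle, else the trace would be finite) together with the observation that $B\subseteq G$ is independent in the rigidity matroid, whence rigid implies minimally rigid; the redundancy detour can be deleted. Finally, your sub-case in which the circuit contributes a minimally rigid subgraph of $G$ containing all of $i,j,k$ is vacuous under the standing assumption that the tracing set is one-dimensional (such a subgraph would force a finite trace), so the ``delete one further edge'' manoeuvre, while formally consistent with Definition~\ref{def:simp}, is never needed.
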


\begin{proof}
By the theorem of Jackson/Jord\`an, the graph $G'$ is not 3-connected or not redundantly rigid. We distinguish these two cases.

Case~1: $G'$ is not 3-connected. Then there exist $u,v\in V(G)$ such that the full subgraph of $G'$ with vertex set $V(G)-\{u,v\}$
is not connected. From the connected components and the vertices $u$ and $v$, we can form two proper subgraphs $G_1,G_2$ of $G'$
such that $E(G')$ is the disjoint union of $E(G_1)$ and $E(G_2)$, and $V(G_1)\cap V(G_2)=\{u,v\}$. At least one of $G_1,G_2$ is
rigid, because otherwise $G'$ could not be rigid, contrary to one of the assumptions in the beginning of this section.
Without loss of generality, we assume that $G_2$ is rigid. Since the 3-cycle with vertices $i,j,k$ is contained in $G'$, all
its three vertices must either be all in $G_1$ or all in $G_2$. They cannot be in $G_2$ because then the tracing set would
be finite. Hence they are all in $G_1$. And then $G_1$ minus the two edges $\{i,k\}$ and $\{j,k\}$ is a simplification of the first kind.

Case~2: $G'$ is not redundantly rigid. If either $\{i,k\}$ or $\{j,k\}$ is already in $G$, then the trace is a circle,
and the full subgraph with vertices $i,j,k$ is a simplification of the second kind. Similarily, if $G'$ minus the edge $\{i,k\}$
is not rigid, then the trace is a circle, and we get a simplification as well. So we assume $\{i,k\}$ and $\{j,k\}$ are
not in $G$ and that the two graphs $G'$ minus $\{i,k\}$ and $G'$ minus $\{j,k\}$ are rigid; then the two graphs are necessarily
minimally rigid. The first statement implies that $|E(G')=2|V(G')|-2$. By Lemma~\ref{lem:hyp}, $G'$ has a rigid subgraph $G_1$
such that $|E(G_1)|=2|V(G_1)|-2$. By Geiringer/Laman, $G_1$ cannot be a subgraph of ($G'$ minus $\{i,k\}$). Hence $\{i,k\}\in E(G_1)$.
Similarily, $\{j,k\}\in E(G_1)$. So, $G_1$ minus the two edges $\{i,k\}$ and $\{j,k\}$ is a simplification of the second kind.
\end{proof}
Proposition~\ref{prop:ggr} and Proposition~\ref{prop:simpl} together imply that the genus of a component of a trace
is equal to the genus of a component of the configuration curve, after fully simplifying the graph (meaning 
repeatedly finding simplifications until the extended graph is globally rigid). Hence we can use
Algorithm~\ref{alg1} for computing the genus of a trace.

\section{Examples}

In this section we give a fully worked example for $C_4$, and then give the genus of the generic configuration space for various other graphs. For the family of graphs $K_{2,n}$ we prove a formula for the genus using the Riemann-Hurwitz formula, and show that it agrees with our computations.

\subsection{The graph \texorpdfstring{$C_4$}{C4}}

In the case of the four-cycle $C_4$, we normalise the edge variables $s_0 = t_0 = 1$, and after picking generic positive valuation Puiseux series $\lambda_1,\lambda_2,\lambda_3$, the generic fiber for the edge map $f_{C_4}$ is the set of $(s_1,s_2,s_3,t_1,t_2,t_3) \in (\mathbb C\{\{t\}\}^*)^6$ that satisfy the conditions
\begin{gather*}
    s_1t_1 = \lambda_1, \quad s_2t_2 = \lambda_2, \quad s_3t_3 = \lambda_3 \\
    1 + s_1 + s_2 + s_3 = 0, \quad 1 + t_1 + t_2 + t_3 = 0.
\end{gather*}
After tropicalization, these equations become
\begin{gather*}
    u_0 = v_0 = 0, \quad u_1+v_1 = w_1, \quad u_2+v_2 = w_2, \quad u_3+v_3 = w_3 \\
    \min\{0,u_1,u_2,u_3\} \text{ occurs twice,} \quad  \min\{0,v_1,v_2,v_3\} \text{ occurs twice.} 
\end{gather*}
Finally, after using the first three equations to eliminate the $v$ variables, we find the two tropical varieties $X$ and $Y$ as in Section \ref{sec:genus}. We rename the three remaining variables to $x,y,z$ for convenience.
\begin{gather*}
    X:= \left\{(x,y,z) \in \mathbb R^3 : \min\{0,x,y,z\} \text{ occurs twice}\right\} \\
    Y:= \left\{(x,y,z) \in \mathbb R^3 : \min\{0,w_1 - x,w_2-y,w_3 -z\} \text{ occurs twice}\right\}.
\end{gather*}
The tropical curve we are interested in is the intersection of these two polyhedral complexes. They are shown below, together with their intersection. 
\begin{figure}[H]
  \centering
  \begin{minipage}[b]{0.49\textwidth}
    \centering
  \includegraphics[scale=0.4]{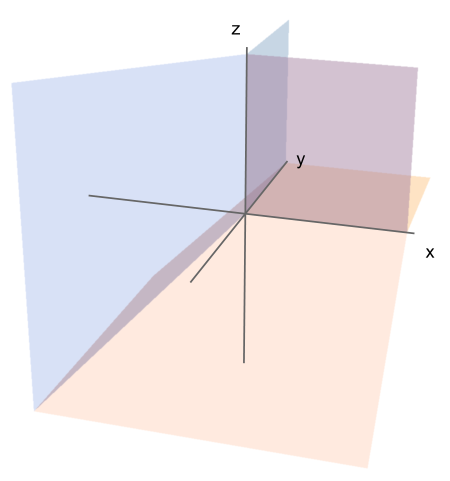}
    \caption{The Polyhedral Complex $X$}
  \end{minipage}
  \hfill
  \begin{minipage}[b]{0.49\textwidth}
    \centering
  \includegraphics[scale=0.4]{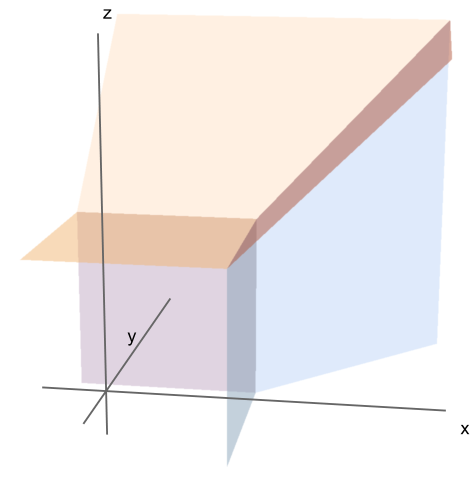}
    \caption{The Polyhedral Complex $Y$}
  \end{minipage}
\end{figure}
\begin{figure}[H]
    \centering
    \includegraphics[scale=0.4]{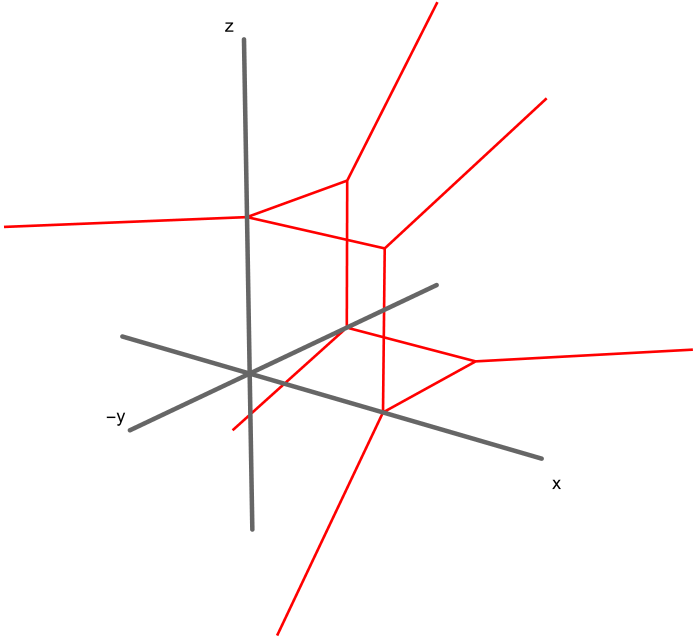}
    \caption{The tropical curve $X \cap Y$ corresponding to $C_4$}
    \label{fig:c4tropical}
\end{figure}
We see from Figure \ref{fig:c4tropical} that the underlying graph of $X \cap Y$ is $C_6$, which has Betti number one, confirming that the graph $C_4$ has generic fiber of genus one.

\subsection{The Bipartite Graphs \texorpdfstring{$K_{2,m}$}{K2m}}

Generalising the example $C_4 = K_{2,2}$ above, we can use our algorithm to calculate the genus of the generic fiber for the bipartite graphs $K_{2,m}$, for $m$ up to 11. The values are shown below.

\begin{table}[H]
    \centering 
    \begin{tabular}{c|ccccccccccc}
        $m$& 1 & 2 & 3 & 4 & 5 & 6 & 7 & 8 & 9 & 10 & 11\\
        \hline
        $g(K_{2,m})$ & 0 & 1 & 5 & 17 & 49 & 129 & 321 & 769 & 1793 & 4097 & 9217\\
    \end{tabular}
    \caption{The genus of the bipartite graphs $K_{2,m}$ calculated by the algorithm.}
    \label{K2mvalues}
\end{table}

These values can be verified, and further a closed form for the genus $g(K_{2,m})$ can be given by using the Riemann-Hurwitz formula. Let $C_{K_{2,m}}$ denote the curve given by the generic fiber of $K_{2,m}$, and we consider $K_{2,m}$ as being obtained by a single Henneberg 1-move (also called a 0-extension) from $K_{2,m-1}$, which creates the new vertex $w = (z_1,z_2)$. Applying Riemann-Hurwitz to the projection 
$$\pi: C_{K_{2,m}} \rightarrow C_{K_{2,m-1}}, (x_1,y_1,...,x_{m-1},y_{m-1},z_1,z_2) \mapsto (x_1,y_1,...,x_{m-1},y_{m-1})$$
we see that this is a branched covering of degree $2$, whose ramification points correspond to realisations of $K_{2,m}$ where the new vertex $w$ is collinear with its neighbours $v_1$ and $v_2$. Such realisations occur in two scenarios: 
$$d(v_1,v_2)^2 = (d(v_1,w) + d(w,v_2))^2, \quad d(v_1,v_2)^2 = (d(v_1,w) - d(w,v_2))^2$$
which gives two possible values for the squared distance between $v_1$ and $v_2$. The number of times one of these equations is satisfied in $K_{2,m}$ is the realisation number of the graph $K_{2,m} \cup \{v_1,v_2\}$, where the squared distance between $v_1$ and $v_2$ is chosen as the one being tested for. This graph is a collection of $m$ triangles with a common edge; however the triangle containing $w$ is degenerate (by the collinearity of $v_1$, $v_2$, and $w$), and therefore this graph has realisation number $2^{m-1}$. Doing this for each of the two possible squared distance, we find that the number of ramification points is $2^m$. The Riemann-Hurwitz formula then gives
$$2g(C_{K_{2,m}}) - 2 = 2 (2g(C_{K_{2,m-1}}) - 2) + 2^m.$$
Solving this recurrence gives the closed form 
$$g(C_{K_{2,m}}) = (m-2)2^{m-1} + 1$$
which agrees with the calculated values in Table \ref{K2mvalues}.

\begin{remark}
    The graphs $K_{2,m}$, $m\geq 2$ fit into a larger, two parameter family of graphs $O_{l,r}$, where the graph $O_{l,r}$ is obtained from the four cycle $K_{2,2}$ by performing $l$ 0-extensions on the two `left' vertices of the partition, and $r$ 0-extensions on the two `right' vertices of the partition. Note that it is irrelevant in which order these extensions are performed, and that $O_{l,r}$ is a bipartite graph with $4 + l + r$ vertices. The graphs $K_{2,m}$ are the subfamily of graphs $O_{m-2,0}$. 

    An almost identical application of Riemann-Hurwitz to the above, shows that the behaviour of the genus of these graphs is independent of the side on which the 0-extension is performed; that is, we have
    $$g(C_{O_{l,r}}) = g(C_{O_{l-1,r+1}})= ...= g(C_{K_{2,l+r+2}}).$$
\end{remark}

\subsection{Further Examples}

Using our algorithm, we have calculated the genus for various graphs. Below is a list of graphs together with their genus. Note that we do not list 1-dof graphs which are simply a smaller 1-dof graph with a rigid part attached along an edge, such as the $C_4$ with a triangle on one of its edges. A rigid graph $R$ attached to an edge of a 1-dof graph does not change the genus of (a simple component of) the generic fiber - this can be seen by applying Riemann-Hurwitz to the projection which removes $R$ - this map has degree $1$ with no branching points, when applied to a single component of the generic configuration space of the larger graph.

\vspace{5mm}

\begin{minipage}{0.3\textwidth}
\centering
    \begin{tikzpicture}[scale = 0.7]
        \draw[gray, thick] (-1,0) -- (1,0);
        \draw[gray, thick] (-1,0) -- (-2,-1);
        \draw[gray, thick] (-2,1) -- (-2,-1);
        \draw[gray, thick] (1,0) -- (2,-1);
        \draw[gray, thick] (2,1) -- (2,-1);
        \draw[gray, thick] (-2,1) -- (2,1);
        \draw[gray, thick] (-2,-1) -- (2,-1);
        \draw[gray, thick] (2,1) -- (1,0);
        \filldraw[black] (-1,0) circle (2pt);
        \filldraw[black] (1,0) circle (2pt);
        \filldraw[black] (-2,-1) circle (2pt);
        \filldraw[black] (-2,1) circle (2pt);
        \filldraw[black] (2,-1) circle (2pt);
        \filldraw[black] (2,1) circle (2pt);
    \end{tikzpicture}
    
    Genus 5
\end{minipage}
\begin{minipage}{0.3\textwidth}
\centering
    \begin{tikzpicture}[scale = 0.7]
        \draw[gray, thick] (-1,0) -- (1,0);
        \draw[gray, thick] (-1,0) -- (-2,-1);
        \draw[gray, thick] (-2,1) -- (-1,0);
        \draw[gray, thick] (1,0) -- (2,-1);
        \draw[gray, thick] (2,1) -- (2,-1);
        \draw[gray, thick] (-2,1) -- (2,1);
        \draw[gray, thick] (-2,-1) -- (2,-1);
        \draw[gray, thick] (2,1) -- (1,0);
        \filldraw[black] (-1,0) circle (2pt);
        \filldraw[black] (1,0) circle (2pt);
        \filldraw[black] (-2,-1) circle (2pt);
        \filldraw[black] (-2,1) circle (2pt);
        \filldraw[black] (2,-1) circle (2pt);
        \filldraw[black] (2,1) circle (2pt);
    \end{tikzpicture}
    
    Genus 5
\end{minipage}
\begin{minipage}{0.3\textwidth}
    \centering
    \begin{tikzpicture}[scale = 0.7]
        \draw[gray, thick] (-1.5,0) -- (-0.5,1);
        \draw[gray, thick] (-0.5,1) -- (0.5,1);
        \draw[gray, thick] (0.5,1) -- (1.5,0);
        \draw[gray, thick] (-1.5,0) -- (-0.5,-1);
        \draw[gray, thick] (-0.5,-1) -- (0.5,-1);
        \draw[gray, thick] (1.5,0) -- (0.5,-1);
        \draw[gray, thick] (0.5,1) -- (1.5,0);
        \draw[gray, thick] (0.5,1) -- (0,0);
        \draw[gray, thick] (-0.5,1) -- (0,0);
        \draw[gray, thick] (0.5,-1) -- (0,0);
        \draw[gray, thick] (-0.5,-1) -- (0,0);
        \filldraw[black] (-1.5,0) circle (2pt);
        \filldraw[black] (-0.5,1) circle (2pt);
        \filldraw[black] (0.5,1) circle (2pt);
        \filldraw[black] (1.5,0) circle (2pt);
        \filldraw[black] (-0.5,-1) circle (2pt);
        \filldraw[black] (0.5,-1) circle (2pt);
        \filldraw[black] (0,0) circle (2pt);
    \end{tikzpicture}
    
    \centering
    Genus 5
\end{minipage}
\vspace{5mm}

\begin{minipage}{0.3\textwidth}
    \centering
    \begin{tikzpicture}[scale = 0.7]
        \draw[gray, thick] (-1,0) -- (1,0);
        \draw[gray, thick] (-1,0) -- (-2,-1);
        \draw[gray, thick] (-2,1) -- (-2,-1);
        \draw[gray, thick] (1,0) -- (2,-1);
        \draw[gray, thick] (2,1) -- (2,-1);
        \draw[gray, thick] (-2,1) -- (-1,0);
        \draw[gray, thick] (-2,-1) -- (2,-1);
        \draw[gray, thick] (2,1) -- (1,0);
        \draw[gray, thick] (2,1) -- (0,1.5);
        \draw[gray, thick] (-2,1) -- (0,1.5);
        \filldraw[black] (-1,0) circle (2pt);
        \filldraw[black] (1,0) circle (2pt);
        \filldraw[black] (-2,-1) circle (2pt);
        \filldraw[black] (-2,1) circle (2pt);
        \filldraw[black] (2,-1) circle (2pt);
        \filldraw[black] (2,1) circle (2pt);
        \filldraw[black] (0,1.5) circle (2pt);
    \end{tikzpicture}
    
    \centering
    Genus 7
\end{minipage}
\begin{minipage}{0.3\textwidth}
\centering
    \begin{tikzpicture}[scale = 0.7]
        \draw[gray, thick] (-1,0) -- (1,0);
        \draw[gray, thick] (-1,0) -- (0,1);
        \draw[gray, thick] (1,0) -- (0,1);
        \draw[gray, thick] (1,0) -- (1,-1);
        \draw[gray, thick] (-1,0) -- (-1,-1);
        \draw[gray, thick] (-1,-1) -- (1,-1);
        \draw[gray, thick] (-2,0) -- (0,1);
        \draw[gray, thick] (2,0) -- (0,1);
        \draw[gray, thick] (2,0) -- (1,-1);
        \draw[gray, thick] (-2,0) -- (-1,-1);
        \filldraw[black] (-1,0) circle (2pt);
        \filldraw[black] (-1,-1) circle (2pt);
        \filldraw[black] (1,0) circle (2pt);
        \filldraw[black] (0,1) circle (2pt);
        \filldraw[black] (1,-1) circle (2pt);
        \filldraw[black] (-2,0) circle (2pt);
        \filldraw[black] (2,0) circle (2pt);
    \end{tikzpicture}
    
    Genus 17
\end{minipage}
\begin{minipage}{0.3\textwidth}
\centering
    \begin{tikzpicture}[scale = 0.7]
        \draw[gray, thick] (-1.5,0) -- (-0.5,1);
        \draw[gray, thick] (-0.5,1) -- (0.5,1);
        \draw[gray, thick] (0.5,1) -- (1.5,0);
        \draw[gray, thick] (-1.5,0) -- (0,-1);
        \draw[gray, thick] (1.5,0) -- (0,-1);
        \draw[gray, thick] (0.5,1) -- (1.5,0);
        \draw[gray, thick] (0,-1) -- (0,0);
        \draw[gray, thick] (-1.5,0) -- (0,0.5);
        \draw[gray, thick] (1.5,0) -- (0,0.5);
        \draw[gray, thick] (0,0.5) -- (0,0);
        \draw[gray, thick] (0,0) -- (-0.5,1);
        \filldraw[black] (-1.5,0) circle (2pt);
        \filldraw[black] (-0.5,1) circle (2pt);
        \filldraw[black] (0.5,1) circle (2pt);
        \filldraw[black] (1.5,0) circle (2pt);
        \filldraw[black] (0,-1) circle (2pt);
        \filldraw[black] (0,0) circle (2pt);
        \filldraw[black] (0,0.5) circle (2pt);
    \end{tikzpicture}
    
    Genus 49
\end{minipage}

\vspace{5mm}

\begin{minipage}{0.3\textwidth}
\centering
    \begin{tikzpicture}[scale = 0.7]
        \draw (0,0) to[out=80,in=180] (2,2);
        \draw[gray, thick] (0,0) -- (1,1);
        \draw[gray, thick] (0,0) -- (3,0);
        \draw[gray, thick] (0,0) -- (0,3);
        \draw[gray, thick] (1,1) -- (2,1);
        \draw[gray, thick] (1,1) -- (1,2);
        \draw[gray, thick] (1,2) -- (2,2);
        \draw[gray, thick] (2,1) -- (2,2);
        \draw[gray, thick] (3,0) -- (3,3);
        \draw[gray, thick] (2,2) -- (3,3);
        \draw[gray, thick] (2,1) -- (3,0);
        \draw[gray, thick] (1,2) -- (0,3);
        \filldraw[black] (0,0) circle (2pt);
        \filldraw[black] (3,0) circle (2pt);
        \filldraw[black] (0,3) circle (2pt);
        \filldraw[black] (3,3) circle (2pt);
        \filldraw[black] (1,1) circle (2pt);
        \filldraw[black] (2,1) circle (2pt);
        \filldraw[black] (1,2) circle (2pt);
        \filldraw[black] (2,2) circle (2pt);
    \end{tikzpicture}
    
    Genus 129
\end{minipage}
\begin{minipage}{0.3\textwidth}
\centering
    \begin{tikzpicture}[scale = 0.7]
        \draw[gray, thick] (2,4) -- (3,4);
        \draw[gray, thick] (3,4) -- (4,3);
        \draw[gray, thick] (4,2) -- (4,3);
        \draw[gray, thick] (4,2) -- (3,1);
        \draw[gray, thick] (3,1) -- (2,1);
        \draw[gray, thick] (1,2) -- (2,1);
        \draw[gray, thick] (1,2) -- (1,3);
        \draw[gray, thick] (1,3) -- (2,4);
        \draw[gray, thick] (2,4) -- (3,1);
        \draw[gray, thick] (3,4) -- (2,1);
        \draw[gray, thick] (4,2) -- (1,3);
        \draw[gray, thick] (4,3) -- (1,2);
        \filldraw[black] (2,4) circle (2pt);
        \filldraw[black] (3,4) circle (2pt);
        \filldraw[black] (4,3) circle (2pt);
        \filldraw[black] (4,2) circle (2pt);
        \filldraw[black] (3,1) circle (2pt);
        \filldraw[black] (2,1) circle (2pt);
        \filldraw[black] (1,3) circle (2pt);
        \filldraw[black] (1,2) circle (2pt);

    \end{tikzpicture}
    
    Genus 225
\end{minipage}
\begin{minipage}{0.3\textwidth}
\centering
    \begin{tikzpicture}[scale = 0.7]
        \draw[gray, thick] (0,0) -- (1,1);
        \draw[gray, thick] (0,0) -- (3,0);
        \draw[gray, thick] (0,0) -- (0,3);
        \draw[gray, thick] (1,1) -- (2,1);
        \draw[gray, thick] (1,1) -- (1,2);
        \draw[gray, thick] (1,2) -- (2,2);
        \draw[gray, thick] (2,1) -- (2,2);
        \draw[gray, thick] (3,0) -- (3,3);
        \draw[gray, thick] (0,3) -- (3,3);
        \draw[gray, thick] (2,2) -- (3,3);
        \draw[gray, thick] (2,1) -- (3,0);
        \draw[gray, thick] (1,2) -- (0,3);
        \filldraw[black] (0,0) circle (2pt);
        \filldraw[black] (3,0) circle (2pt);
        \filldraw[black] (0,3) circle (2pt);
        \filldraw[black] (3,3) circle (2pt);
        \filldraw[black] (1,1) circle (2pt);
        \filldraw[black] (2,1) circle (2pt);
        \filldraw[black] (1,2) circle (2pt);
        \filldraw[black] (2,2) circle (2pt);
        
    \end{tikzpicture}
    
    Genus 247
\end{minipage}

\vspace{5mm}

\begin{minipage}{0.3\textwidth}
\centering
    \begin{tikzpicture}[scale = 0.7]
        \draw[gray, thick] (-1,0) -- (1,0);
        \draw[gray, thick] (-1,0) -- (-2,-1);
        \draw[gray, thick] (-2,1) -- (-1,0);
        \draw[gray, thick] (1,0) -- (2,-1);
        \draw[gray, thick] (2,1) -- (2,-1);
        \draw[gray, thick] (-2,1) -- (2,1);
        \draw[gray, thick] (-2,-1) -- (2,-1);
        \draw[gray, thick] (2,1) -- (1,0);
        \draw[gray, thick] (-2,1) -- (-2,-1);
        \draw[gray, thick] (0,1) -- (0,-1);
        \filldraw[black] (-1,0) circle (2pt);
        \filldraw[black] (1,0) circle (2pt);
        \filldraw[black] (-2,-1) circle (2pt);
        \filldraw[black] (-2,1) circle (2pt);
        \filldraw[black] (2,-1) circle (2pt);
        \filldraw[black] (2,1) circle (2pt);
        \filldraw[black] (0,0) circle (2pt);
        \filldraw[black] (0,1) circle (2pt);
        \filldraw[black] (0,-1) circle (2pt);
    \end{tikzpicture}
    
    Genus 85
\end{minipage}
\begin{minipage}{0.3\textwidth}
\centering
    \begin{tikzpicture}[scale = 0.6]
        \draw[gray, thick] (0,3) -- (1,0);
        \draw[gray, thick] (0,3) -- (1.5,2.5);
        \draw[gray, thick] (0,3) -- (3,4);
        \draw[gray, thick] (1,0) -- (5,0);
        \draw[gray, thick] (1,0) -- (2,1);
        \draw[gray, thick] (5,0) -- (4,1);
        \draw[gray, thick] (5,0) -- (6,3);
        \draw[gray, thick] (6,3) -- (4.5,2.5);
        \draw[gray, thick] (6,3) -- (3,4);
        \draw[gray, thick] (1.5,2.5) -- (4.5,2.5);
        \draw[gray, thick] (1.5,2.5) -- (4,1);
        \draw[gray, thick] (3,3) -- (2,1);
        \draw[gray, thick] (3,3) -- (4,1);
        \draw[gray, thick] (4.5,2.5) -- (2,1);
        \draw[gray, thick] (3,4) -- (3,3);

        \draw[gray, thick] (2,1) -- (4,1);
        \filldraw[black] (0,3) circle (2pt);
        \filldraw[black] (1,0) circle (2pt);
        \filldraw[black] (1.5,2.5) circle (2pt);
        \filldraw[black] (3,3) circle (2pt);
        \filldraw[black] (3,4) circle (2pt);
        \filldraw[black] (4,1) circle (2pt);
        \filldraw[black] (5,0) circle (2pt);
        \filldraw[black] (4.5,2.5) circle (2pt);
        \filldraw[black] (6,3) circle (2pt);
        \filldraw[black] (2,1) circle (2pt);
        
    \end{tikzpicture}
    
    Genus 785
\end{minipage}
\begin{minipage}{0.3\textwidth}
\centering
    \begin{tikzpicture}[scale = 0.7]
        \draw[gray, thick] (0,0) -- (1,1);
        \draw[gray, thick] (0,0) -- (3,0);
        \draw[gray, thick] (0,0) -- (0,3);
        \draw[gray, thick] (1,1) -- (2,1);
        \draw[gray, thick] (1,1) -- (1,2);
        \draw[gray, thick] (1,2) -- (2,2);
        \draw[gray, thick] (2,1) -- (2,2);
        \draw[gray, thick] (3,0) -- (3,3);
        \draw[gray, thick] (2,2) -- (3,3);
        \draw[gray, thick] (2,1) -- (3,0);
        \draw[gray, thick] (1,2) -- (0,3);
        \draw[gray, thick] (1.5,2.5) -- (2,2);
        \draw[gray, thick] (1.5,2.5) -- (1,2);
        \draw[gray, thick] (1.5,3) -- (0,3);
        \draw[gray, thick] (1.5,3) -- (3,3);
        \draw[gray, thick] (1.5,2.5) -- (1.5,3);
        \filldraw[black] (0,0) circle (2pt);
        \filldraw[black] (3,0) circle (2pt);
        \filldraw[black] (0,3) circle (2pt);
        \filldraw[black] (3,3) circle (2pt);
        \filldraw[black] (1,1) circle (2pt);
        \filldraw[black] (2,1) circle (2pt);
        \filldraw[black] (1,2) circle (2pt);
        \filldraw[black] (2,2) circle (2pt);
        \filldraw[black] (1.5,2.5) circle (2pt);
        \filldraw[black] (1.5,3) circle (2pt);
        
    \end{tikzpicture}
    
    Genus 795
\end{minipage}

\begin{minipage}{0.3\textwidth}
\centering
    \begin{tikzpicture}[scale = 0.7]
        \draw[gray, thick] (0,0) -- (1,1);
        \draw[gray, thick] (0,0) -- (3,0);
        \draw[gray, thick] (0,0) -- (0,3);
        \draw[gray, thick] (1,1) -- (2,1);
        \draw[gray, thick] (1,1) -- (1,2);
        \draw[gray, thick] (1,2) -- (2,2);
        \draw[gray, thick] (2,1) -- (2,2);
        \draw[gray, thick] (3,0) -- (3,3);
        \draw[gray, thick] (2,2) -- (3,3);
        \draw[gray, thick] (2,1) -- (3,0);
        \draw[gray, thick] (1,2) -- (0,3);
        \draw[gray, thick] (1.5,2.5) -- (2,2);
        \draw[gray, thick] (1.5,2.5) -- (1,2);
        \draw[gray, thick] (1.5,3) -- (0,3);
        \draw[gray, thick] (1.5,3) -- (3,3);
        \draw[gray, thick] (1.5,2.5) -- (1.5,3);
        \draw[gray, thick] (1.5,0.5) -- (1.5,0);
        \draw[gray, thick] (1.5,0.5) -- (1,1);
        \draw[gray, thick] (1.5,0.5) -- (2,1);
        \filldraw[black] (0,0) circle (2pt);
        \filldraw[black] (3,0) circle (2pt);
        \filldraw[black] (0,3) circle (2pt);
        \filldraw[black] (3,3) circle (2pt);
        \filldraw[black] (1,1) circle (2pt);
        \filldraw[black] (2,1) circle (2pt);
        \filldraw[black] (1,2) circle (2pt);
        \filldraw[black] (2,2) circle (2pt);
        \filldraw[black] (1.5,2.5) circle (2pt);
        \filldraw[black] (1.5,3) circle (2pt);
        \filldraw[black] (1.5,0.5) circle (2pt);
        \filldraw[black] (1.5,0) circle (2pt);
    \end{tikzpicture}
    
    Genus 2391
\end{minipage}
\begin{minipage}{0.3\textwidth}
\centering
    \begin{tikzpicture}[scale = 0.7]
        \draw[gray, thick] (0,0) -- (1,1);
        \draw[gray, thick] (0,0) -- (3,0);
        \draw[gray, thick] (0,0) -- (0,3);
        \draw[gray, thick] (1,1) -- (2,1);
        \draw[gray, thick] (1,1) -- (1,2);
        \draw[gray, thick] (1,2) -- (2,2);
        \draw[gray, thick] (2,1) -- (2,2);
        \draw[gray, thick] (3,0) -- (3,3);
        \draw[gray, thick] (2,2) -- (3,3);
        \draw[gray, thick] (2,1) -- (3,0);
        \draw[gray, thick] (1,2) -- (0,3);
        \draw[gray, thick] (1.5,2.5) -- (2,2);
        \draw[gray, thick] (1.5,2.5) -- (1,2);
        \draw[gray, thick] (1.5,3) -- (0,3);
        \draw[gray, thick] (1.5,3) -- (3,3);
        \draw[gray, thick] (1.5,2.5) -- (1.5,3);
        \draw[gray, thick] (2.5,1.5) -- (3,1.5);
        \draw[gray, thick] (2.5,1.5) -- (2,2);
        \draw[gray, thick] (2.5,1.5) -- (2,1);
        \filldraw[black] (0,0) circle (2pt);
        \filldraw[black] (3,0) circle (2pt);
        \filldraw[black] (0,3) circle (2pt);
        \filldraw[black] (3,3) circle (2pt);
        \filldraw[black] (1,1) circle (2pt);
        \filldraw[black] (2,1) circle (2pt);
        \filldraw[black] (1,2) circle (2pt);
        \filldraw[black] (2,2) circle (2pt);
        \filldraw[black] (1.5,2.5) circle (2pt);
        \filldraw[black] (1.5,3) circle (2pt);
        \filldraw[black] (2.5,1.5) circle (2pt);
        \filldraw[black] (3,1.5) circle (2pt);
    \end{tikzpicture}
    
    Genus 2277
\end{minipage}
\begin{minipage}{0.3\textwidth}
\centering
    \begin{tikzpicture}[scale = 0.5]
        \draw[gray, thick] (0,0) -- (1,1);
        \draw[gray, thick] (0,0) -- (3,0);
        \draw[gray, thick] (0,0) -- (0,3);
        \draw[gray, thick] (1,1) -- (2,1);
        \draw[gray, thick] (1,1) -- (1,2);
        \draw[gray, thick] (1,2) -- (2,2);
        \draw[gray, thick] (2,1) -- (2,2);
        \draw[gray, thick] (3,0) -- (3,3);
        \draw[gray, thick] (2,2) -- (3,3);
        \draw[gray, thick] (2,1) -- (3,0);
        \draw[gray, thick] (1,2) -- (0,3);
        \draw[gray, thick] (0,3) -- (3,3);
        \draw[gray, thick] (-1,-1) -- (4,-1);
        \draw[gray, thick] (-1,-1) -- (-1,4);
        \draw[gray, thick] (-1,4) -- (4,4);
        \draw[gray, thick] (4,-1) -- (4,4);
        \draw[gray, thick] (3,3) -- (4,4);
        \draw[gray, thick] (-1,-1) -- (0,0);
        \draw[gray, thick] (4,-1) -- (3,0);
        \draw[gray, thick] (-1,4) -- (0,3);
        \filldraw[black] (0,0) circle (2pt);
        \filldraw[black] (3,0) circle (2pt);
        \filldraw[black] (0,3) circle (2pt);
        \filldraw[black] (3,3) circle (2pt);
        \filldraw[black] (1,1) circle (2pt);
        \filldraw[black] (2,1) circle (2pt);
        \filldraw[black] (1,2) circle (2pt);
        \filldraw[black] (2,2) circle (2pt);
        \filldraw[black] (-1,-1) circle (2pt);
        \filldraw[black] (4,4) circle (2pt);
        \filldraw[black] (-1,4) circle (2pt);
        \filldraw[black] (4,-1) circle (2pt);
    \end{tikzpicture}
    
    Genus 13777
\end{minipage}
\vspace{5mm}

In addition to the genus, the code outputs an image of the tropical curve, with the infinite rays removed. Some example images are shown below.

\begin{minipage}{0.45\textwidth}
\centering
        \includegraphics[width=1\linewidth]{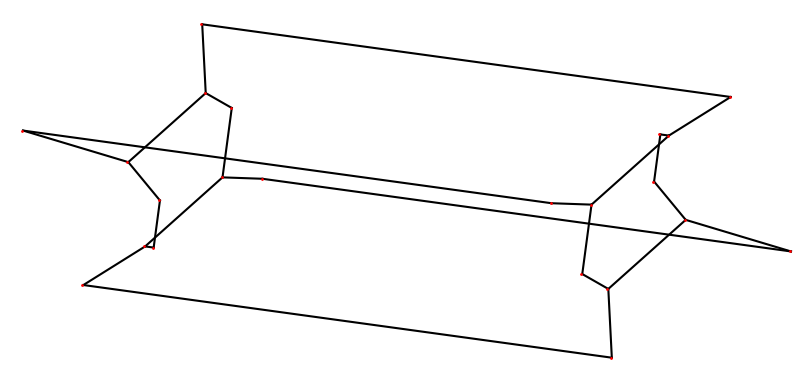}
        \captionof*{figure}{The tropical curve of $K_{2,3}$ \\
        Genus 5}
\end{minipage}
\begin{minipage}{0.45\textwidth}
\centering
        \includegraphics[width=1\linewidth]{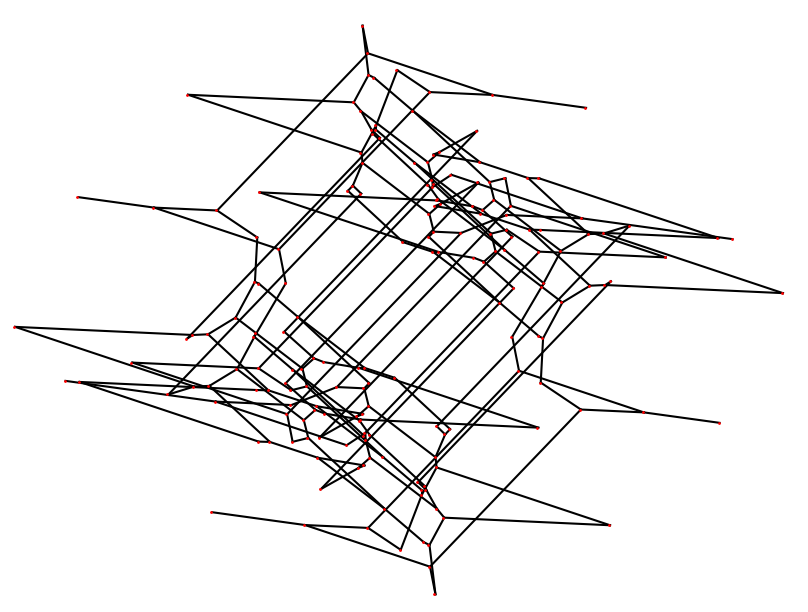}
        \captionof*{figure}{The tropical curve of the diamond graph \\
        (Row 2, column 3 above, genus 49)}
\end{minipage}

\vspace{5mm}

As a concluding remark, we observe that for all examples we computed above, the genus of the configuration space is odd. It seems reasonable to conjecture that for all 1-dof graphs, the configuration space for generic edge lengths has odd (or zero) genus. 

This does not contradict Kempe's universality theorem, which states that any algebraic curve in the plane is the trace of a vertex of some graph. If we would give a plane algebraic curve of even genus, our results in Section \ref{sec:4 couplercurves} would imply that it cannot be the trace of a generic 1-dof graph - otherwise after simplifying we would find a (generic) 1-dof graph with configuration set of even genus. This is not a contradition with the conjecture, rather it implies that such traces are given by non-generic configuration sets of graphs.

\begin{figure}[H]
\centering
        \includegraphics[width=0.7\linewidth]{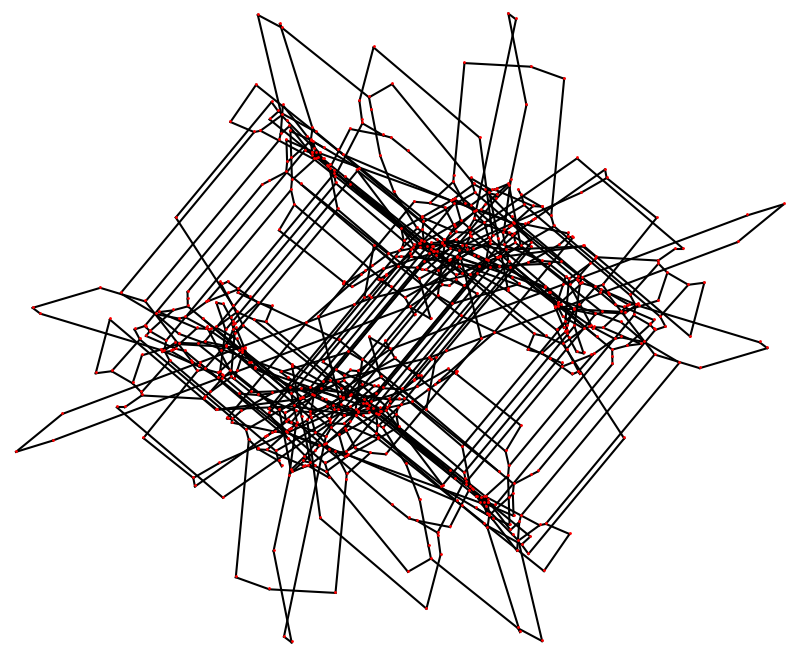}
        \caption{The tropical curve of the Wagner graph \\
        Genus 225}
\end{figure}

\section*{Acknowledgements}

The authors would like to thank Jan Legersk\'{y}, Matteo Gallet, and many guests of the RICAM Special Semester on Rigidity and Flexibility for helpful conversations.

\vspace{5mm}
\noindent Research Institute for Symbolic Computation (RISC)
\\Johannes Kepler University, Linz, Austria
\\\url{josef.schicho@risc.jku.at}
\\[5mm]
Johann Radon Institute for Computational and Applied Mathematics (RICAM)
\\Austrian Academy of Sciences, Linz, Austria
\\\url{ayushkumar.tewari@ricam.oeaw.ac.at}
\\\url{audie.warren@oeaw.ac.at}

\bibliography{calligraphgenus}
\end{document}